\documentclass[a4paper, v2021]{lipics-v2021}
\usepackage{amsmath,amsthm,amssymb}
\usepackage{amsfonts}
\usepackage{calc}
\usepackage{amssymb}
\usepackage{amscd}
\usepackage{amsthm}
\usepackage[all]{xy}
\usepackage{tikz-cd}
\SelectTips{cm}{10}
\usepackage{graphics}
\usepackage{verbatim}
\usepackage{graphicx}
\usepackage{fancyhdr}
\usepackage{ushort}
\usepackage{enumitem}
\usepackage{color}
\usepackage{mathtools}

\usepackage{bm}
\usepackage{mathtools}
\usepackage[T1]{fontenc}
\usepackage[utf8]{inputenc}
\usepackage{lmodern}
\usepackage{xcolor}
\usepackage{url}
\usepackage{rotating}
\nolinenumbers
\theoremstyle{plain}

\newtheorem*{theorem*}{Theorem}

\theoremstyle{definition}

\title{The Topology of $k$-Robust Clique Complexes in Grid-like Graphs} 
\titlerunning{Topology of $k$-Robust clique Complexes} 

\author{Marek {Filakovský}}{Faculty of Sceince, Masaryk University, Kotlářská 267/2, 611 37 Brno, Czech Republic}{filakovsky@math.muni.cz}{https://orcid.org/0000-0001-5978-2623}{The work of the author was supported by the OP JAK Project MSCAFellow5\_MUNI (CZ.02.01.01/00/22\_010/0003229) and Masaryk University grant MUNI/SC/1957/2024 (GAMU MASH StG/CoG).}

\authorrunning{M. {Filakovský}} 

\keywords{$k$-robust clique complexes, bipartite graphs, homotopy type, Alexander duality, simplicial complex, grid graphs, K\"{o}nig's theorem}

\newcommand{\AD}{\mathrm{AD}}{}
{}
\newcommand{\Cliq}[2]{\mathsf{Cliq}_{#1}(#2)}

\DeclarePairedDelimiter{\gr}{\lVert}{\rVert}

\DeclarePairedDelimiter{\floor}{\lfloor}{\rfloor}

\ccsdesc[500]{Mathematics of computing~Algebraic topology}
\ccsdesc[500]{Mathematics of computing~Graph theory}
\ccsdesc[300]{Mathematics of computing~Combinatorics}

\hideLIPIcs
\begin{document}
\maketitle

\begin{abstract}




We study $k$-robust clique complexes, a family of simplicial complexes that generalizes the traditional clique complex. Here, a subset of vertices forms a simplex provided it does not contain an independent set of size $k$.

We investigate these complexes for square sequence graphs, a class of bipartite graphs introduced here that are constructed by iteratively attaching $C_4$ cycles. This class includes rectangular grid graphs $G_{m,n}$. We show that for $k=2$ and $k=3$, the homotopy type is a wedge sum of $(2k-3)$-dimensional spheres, a result we extend to arbitrary $k$ under specific structural constraints on the attachment sequence.

Our approach utilizes K\"{o}nig's theorem to decompose the complex into manageable components, whose homotopy types are easy to understand. This then enables an inductive proof based on the decomposition and standard tools of algebraic topology. Finally, we utilize Alexander duality to connect our results to the study of total-$k$-cut complexes, generalizing recent results concerning the homotopy types of total-$k$-cut complexes for grid graphs.



\end{abstract}
\newpage
\section{Introduction}
Simplicial complexes are algebraic structure that are well-suited for capturing the hereditary properties of families of objects. Various complexes have been introduced to study graph-theoretic properties; notably, \emph{neighbourhood complexes} were used by Lov{\'a}sz~\cite{Lovasz78} to prove the Kneser Conjecture, \emph{independence} complexes have been studied extensively in connection to physics~\cite{fendleySV2003} and \emph{clique complexes} have found applications in topological data analysis~\cite{GiustiPCI2015,ReimannNSLMH2017} and sensor networks~\cite{desilvaG2007}. A comprehensive study of graph simplicial complexes is provided by Jonsson~\cite{Jonsson08}, though many new families were since introduced (see e.g. \cite{BayerDJRSX24, white2016} for examples in relation to commutative algebra and chromatic polynomials).


Simplicial complexes provide a bridge between combinatorics, algebra and topology:
In combinatorics, one properties such as shellability, partitionability and $f$-vectors. Topologically, they serve as discrete models of a CW-complexes and enabling the computation of invariants compute topological invariants such as homology and homotopy type. Finally, the connection to algebra is facilitated by the Stanley-Reisner ring~\cite{reisner1976} where algebraic properties such as Cohen-Macaluay and Gorenstein property are  linked to the topological (and combinatorial) properties of the underlying simplicial complex~\cite{reisner1976,bjorner1980}. 

In this paper, we study the $k$-robust clique complex $\Cliq{k}{G}$. A subset of vertices $W\subseteq V(G)$ forms a simplex in $\Cliq{k}{G}$ if the induced subgraph $G[W]$ does not contain an independent set of size $k$. These complexes generalize the standard clique complexes (which corresponds to the case $k=2$).
Our naming is motivated by the relaxation of the clique condition: we "accept" a vertex subset if it is sufficiently dense, permitting missing edges so long as they do not form a forbidden sparsity pattern (an independent set of size $k$). Robust clique complexes were originally introduced in~\cite{KimL2022} (there, the authors did not give the complex a name), where they were chiefly studied in the context of their collapsibility number.
We continue the study of $k$-robust clique complexes by focusing on \emph{grid graphs} $G_{m,n}$ (i.e. Cartesian products of paths with $(m-1)$ and $(n-1)$ edges, respectively). Our main result characterizes their homotopy type as follows:

\begin{theorem}\label{thm:main}
Let $G_{m,n}$ be an $m \times n$ grid graph with $m, n \geq 2$. For  $ k\in \{2,3\}$,
\[
 \gr{\Cliq{k}{G_{m,n}}} \simeq \bigvee_{
    \binom{(m-1)(n-1)}{k-1}} S^{2k-3}.
\]
\end{theorem}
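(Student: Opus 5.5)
We will prove the statement in the greater generality of square sequence graphs. Such a graph $G$ is built from a single $C_4$ by repeatedly gluing a new $C_4$ along a single edge or along a path of two edges. We argue by induction on the number $s$ of squares. The grid $G_{m,n}$ has $s=(m-1)(n-1)$ squares, and it arises by adding squares row by row, left to right. The first square of each row is glued along one edge, and every later square is glued along a path of two edges.

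The key structural input is König's theorem. Since $G$ is bipartite, the independence number of any induced subgraph $G[W]$ equals $|W|$ minus the size of a maximum matching in $G[W]$. Hence $W$ is a face of $\Cliq{k}{G}$ exactly when $|W|-\nu(G[W])\le k-1$. For $k=2$ this means $G[W]$ is a clique, so $W$ is a vertex, an edge, or empty. Thus $\Cliq{2}{G}$ is just the graph $G$ as a $1$-complex, and its homotopy type is a wedge of $|E|-|V|+1$ circles. Each square attachment adds cycle rank one, giving $\bigvee_{s} S^1$. For $k=3$ the faces are the sets $W$ with $|W|-\nu(G[W])\le 2$. Concretely, these are sets with a perfect or near-perfect matching plus up to two unmatched vertices. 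Among small sets this gives all sets of size $\le 2$, edges plus one vertex, paths $P_4$, $C_4$'s, and so on. Any face has to be "almost perfectly matched", and this makes the local structure around a new square controllable.

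For the induction step, write $G'=G\cup Q$, where $Q$ is the new square and $U=V(Q)\setminus V(G)$ is the set of its new vertices ($|U|=2$ for edge-gluing, $|U|=1$ for path-gluing). We decompose $X=\Cliq{k}{G'}$ as $X=\Cliq{k}{G}\cup \bigcup_{u\in U}\mathrm{st}_X(u)$, or we remove the new vertices one at a time using the standard formula $X\simeq (X\setminus u)\cup \mathrm{Cone}(\mathrm{lk}_X u)$. Each step then contributes a suspension of the link wedged onto the smaller complex, provided the link is contractible in $X\setminus u$ or the inclusion is null-homotopic. So we must compute links. For $k=3$, the link of a new vertex $u$ in $\Cliq{3}{G'}$ consists of the sets $W\subseteq V(G')\setminus u$ with $|W\cup u|-\nu(G'[W\cup u])\le 2$. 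By König, splitting according to whether $u$ is matched, this link is the union of $\Cliq{2}{\cdot}$-type pieces and cones. For example, the link of $u$ contains $\Cliq{3}{G'-N[u]}$-like faces joined with the neighbours of $u$. We expect the link to be homotopy equivalent to a wedge of circles, or to a join of two discrete sets, namely $S^0 * S^0$-type pieces. The count then follows from Pascal's rule: adding one square should add exactly $\binom{s}{1}=s$ new spheres $S^3$, so that $\binom{s}{2}+s=\binom{s+1}{2}$. The base case is $s=1$: $\Cliq{3}{C_4}$ is the boundary complex consisting of all subsets except those whose independence number is $\ge 3$. Since $C_4$ has independence number $2$, this gives the full simplex, so the base is contractible and $\binom{1}{2}=0$. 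This matches the formula.

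The main obstacle is the $k=3$ link computation: we must show that the link of the new vertex, relative to the old complex, has the homotopy type of $\bigvee_{s} S^{2}$ and that the inclusion of the link into $\Cliq{3}{G}$ is null-homotopic. Null-homotopy is automatic, because $\Cliq{3}{G}$ is a wedge of $3$-spheres and is therefore $2$-connected. To get the link, we plan to use König again to split it into a union of subcomplexes indexed by the "defect" vertices, meaning those left unmatched. Each such piece is a cone or a join of a point set with the $k=2$ complex, which is a wedge of circles by the first part. We then glue these pieces with the gluing lemma or a Mayer–Vietoris-type homotopy pushout, and use the $k=2$ result on the old graph to identify $s$ circles suspended once. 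If the one-vertex-at-a-time removal is awkward for edge-gluings, we first apply the argument to the path-gluing case and handle edge-gluings by first adding the two new vertices as a pendant path, which does not change the homotopy type, and then closing the square.
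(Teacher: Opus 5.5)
\textbf{There is a genuine gap: the link computation is never carried out, and the count you announce for it is false in the generality you work in.} Your framework matches the paper's corner-gluing step: induct along the row-by-row square sequence, describe faces via König, write $X=(X\setminus x)\cup\mathrm{Cone}(\mathrm{lk}_X x)$, and use the $2$-connectivity of $\bigvee S^3$ to get $X\simeq (X\setminus x)\vee\Sigma\,\mathrm{lk}_X x$. The step that carries all the content is the homotopy type of $\mathrm{lk}_X x$. You assert that for every square sequence graph this link is $\bigvee_s S^2$, so that each new square adds $s$ spheres.

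\textbf{Counterexample.} Take $K_{2,3}$ with parts $\{u,v\}$ and $\{y,y',x\}$. Start from the square $u\,y\,v\,y'$ and corner-glue the square $x\,u\,y\,v$ along the path $u$--$y$--$v$; this is a square sequence of length $2$. Its only independent $3$-set is $\{x,y,y'\}$, so $\Cliq{3}{K_{2,3}}=\partial\Delta^{\{x,y,y'\}}\star\Delta^{\{u,v\}}$ is a cone. Correspondingly $\mathrm{lk}\,x=\Delta^{\{u,v\}}\star\partial\Delta^{\{y,y'\}}$ is contractible, not $S^2$. So the induction cannot run for square sequence graphs as you frame it. Grid-specific input is needed, and your sketch never says where it enters.

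Your heuristics about the link are also off. Its faces avoiding $N(x)$ are the cliques of $G-N(x)$, so they are $\Cliq{2}{\cdot}$-type, not $\Cliq{3}{\cdot}$-type. And a link that is ``a wedge of circles'' would only add $S^2$'s after suspension, not $S^3$'s.

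\textbf{The missing step.} Let $x$ be new with old neighbours $u,v$.
\begin{enumerate}
\item Some edge at $x$ lies in a maximum matching, so Lemma~\ref{l:edge_removal} gives $\mathrm{lk}\,x=L_u\cup L_v$, where $L_u=\Cliq{2}{G}\oplus\{u\}$ and $L_v=\Cliq{2}{G}\oplus\{v\}$.
\item $L_u$ and $L_v$ are cones with apexes $u$ and $v$, hence $\mathrm{lk}\,x\simeq\Sigma(L_u\cap L_v)$.
\item A direct check shows that $L_u\cap L_v$ is the graph $G$ together with the edge $uv$ and the triangles $uvw$ for $w\in A$. Here $A$ is the set of common neighbours of $u$ and $v$ in $G$.
\item Therefore $L_u\cap L_v\simeq\bigvee_{s-|A|+1}S^1$, and the new square contributes $s-|A|+1$ spheres $S^3$.
\end{enumerate}
Pascal's rule $\binom{s}{2}+s=\binom{s+1}{2}$ comes out only because every corner step in the row-by-row order on $G_{m,n}$ has $A=\{y\}$.

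\textbf{Edge steps.} These also need an actual argument. Note that in the first row every square is edge-glued, not only the first one. Your pendant-path idea can be made to work. A pendant vertex $v$ attached at $y$ leaves the homotopy type unchanged. Adding $x$ adjacent to $u$ and $v$ is then a corner-type vertex step over a non-square-sequence graph with $A=\{y\}$, which again gives $s$ new spheres. However, ``closing the square'' by adding an edge, as you phrase it, is not covered by the vertex-link formula. The paper handles edge gluing differently: it uses $L=\Cliq{2}{H_{n-1}}\oplus\Cliq{2}{G_n}\simeq\bigl(\bigvee_s S^1\bigr)\star S^1$, with $K\cap L$ contractible by Lemma~\ref{l:joinsimplex}.
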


Theorem~\ref{thm:main} is obtained as a consequence of a more general result concerning square sequence graphs (Definition~\ref{d:sqseq}). Intuitively, these are bipartite graphs constructed by iteratively attaching $C_4$ cycles via "edge gluing" or "corner gluing" (see Figure~\ref{fig:attach}). Our core technical result provides a recursive characterization:

\begin{theorem}\label{thm:main2}   
Let $G$ be a square sequence graph with sequence $C_4 \cong H_1 \subset  H_2 \subset \cdots \subset H_n \cong G$, $n\geq 1 $. For $k = 3$,
\[
\gr{\Cliq{k}{H_n}} \simeq \bigvee_{\gamma_n} S^{3}, 
\]
where $\gamma_n \geq 0$ is determined by a recurrence relation based on the homotopy type of $\Cliq{k}{H_{n-1}}$ and the specific attachment of the $n$-th square.
\end{theorem}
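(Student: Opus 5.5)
We argue by induction on $n$, along the attachment sequence $H_1 \subset \cdots \subset H_n$.

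\textbf{Base case.} For $n=1$ we have $H_1 \cong C_4$ with vertices $a,b,c,d$ in cyclic order. Its independent sets of size $3$ are empty, since its maximal independent sets are $\{a,c\}$ and $\{b,d\}$. Hence every subset is a simplex and $\Cliq{3}{C_4}$ is a full simplex, which is contractible; so $\gamma_1=0$.

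\textbf{Structural lemma for bipartite graphs.} For a bipartite graph $G[W]$, König's theorem identifies the independence number with $|W|$ minus the maximum matching size. Thus $W\in\Cliq{3}{G}$ if and only if $G[W]$ has $\alpha\le 2$. For bipartite graphs this forces either $|W|\le 2$, or $|W|=3$ with $G[W]$ containing a perfect-ish structure, or $|W|=4$ with $G[W]$ containing a perfect matching and having no independent triple. Concretely, $\alpha(G[W])\le 2$ with $G[W]$ bipartite means each side of the bipartition within $W$ has at most $2$ vertices, and the matching number is at least $|W|-2$. So simplices have at most $4$ vertices. The $4$-vertex simplices are exactly the $4$-cycles $C_4$ and the paths $P_4$ in $G$ (two vertices on each side, containing a perfect matching). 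I would first record this description of the facets of $\Cliq{3}{G}$: the complex is $3$-dimensional, and its facets are the induced $C_4$'s and $P_4$'s, together with certain lower-dimensional maximal faces.

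\textbf{Inductive step.} Let $H_n=H_{n-1}\cup Q$, where $Q$ is the new square, glued along an edge or at a corner, and let $N$ be the set of new vertices ($2$ or $3$ of them). I would build $\Cliq{3}{H_n}$ from $\Cliq{3}{H_{n-1}}$ by adding the new vertices one at a time, or better, all simplices meeting $N$ at once. I would write
\[
\Cliq{3}{H_n}=\Cliq{3}{H_{n-1}}\cup \mathcal{S},
\]
where $\mathcal{S}$ is the union of the closed stars of the new vertices. Every simplex containing a new vertex lies within distance about $2$ of $Q$, by the structural lemma, since a $4$-vertex simplex spans a connected $P_4$ or $C_4$. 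Hence $\mathcal{S}$ and $\mathcal{S}\cap\Cliq{3}{H_{n-1}}$ are explicit small complexes, determined by the local neighbourhood of the gluing site. I then use the pushout / Mayer--Vietoris together with the gluing lemma for homotopy pushouts:
\begin{itemize}
\item Show that $\mathcal{S}$ is contractible, e.g. it is a cone or collapses to the full simplex on $Q$.
\item Show that the intersection $\mathcal{S}\cap\Cliq{3}{H_{n-1}}$ is homotopy equivalent to a wedge of $2$-spheres, or is contractible.
\end{itemize}
Then $\Cliq{3}{H_n}\simeq \Cliq{3}{H_{n-1}}\vee \Sigma(\text{intersection})$, because the intersection's inclusion into $\Cliq{3}{H_{n-1}}$ is null-homotopic: the latter is $2$-connected by induction, being a wedge of $S^3$'s. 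This yields $\gamma_n=\gamma_{n-1}+\delta_n$, where $\delta_n$ is the number of $2$-spheres in the link-type intersection, depending on the attachment.

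\textbf{Main obstacle.} The hard part is computing the homotopy type of the intersection complex. It consists of those simplices of $\Cliq{3}{H_{n-1}}$ that together with some new vertex still have $\alpha\le 2$. This complex depends on how many neighbours the gluing vertices have in $H_{n-1}$, since degrees can be up to $4$. I would handle it by a second application of the König description, listing the possible $P_4$'s through the gluing edge or corner. I would then apply discrete Morse theory or successive link/deletion (vertex decomposition) to reduce it to a wedge of $S^2$'s. I would separate the edge-gluing and corner-gluing cases and check that in each the count $\delta_n$ is a local quantity, giving the recurrence. Null-homotopy of the attaching map needs the $2$-connectivity of $\Cliq{3}{H_{n-1}}$. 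This is guaranteed inductively from the base case, which is contractible.
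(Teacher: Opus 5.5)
There is a genuine gap, and it starts in your structural lemma. Among $4$-sets with two vertices on each side and a perfect matching, you list only $C_4$ and $P_4$. But an induced $2K_2$ (two disjoint edges with nothing between them) also has $\alpha=2$. Likewise, every $3$-set containing an edge is a simplex. None of these need be connected or near $Q$. For a new vertex $x$ and \emph{any} edge $e$ of $H_{n-1}$, $\{x\}\cup e$ is a $2$-simplex. If $u$ is a neighbour of $x$ and $e$ is far away, $\{x,u\}\cup e$ is a $3$-simplex of type $2K_2$. So the link of a new vertex contains a cone over the entire graph complex $\Cliq{2}{H_{n-1}}$. Your claims that $\mathcal S$ and $\mathcal S\cap\Cliq{3}{H_{n-1}}$ are ``small complexes determined by the local neighbourhood'', and that $\delta_n$ is a local quantity, are therefore false.

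Concretely, the intersections are as follows.
\begin{itemize}
\item Corner gluing: the intersection is $\mathrm{lk}(x)=(\Cliq{2}{H_{n-1}}\oplus\{u\})\cup(\Cliq{2}{H_{n-1}}\oplus\{v\})$. These are two cones meeting in $\Cliq{2}{H_{n-1}}\cup\{\{u,v,w\}\mid w\in A\}\simeq\bigvee_{n-|A|}S^1$, so $\mathrm{lk}(x)\simeq\bigvee_{n-|A|}S^2$.
\item Edge gluing: your $\mathcal S\cap\Cliq{3}{H_{n-1}}$ is $(\Cliq{2}{H_{n-1}}\oplus\{u\})\cup(\Cliq{2}{H_{n-1}}\oplus\{y\})\simeq\Sigma\Cliq{2}{H_{n-1}}\simeq\bigvee_{n-1}S^2$.
\end{itemize}
The count is governed by the cycle rank of all of $H_{n-1}$. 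For the grid sequence $\gamma_i=\binom{i}{2}$, so the increments are unbounded. No enumeration of $P_4$'s through the gluing site, followed by a local Morse matching or vertex decomposition, can produce such a count. So the step you call the main obstacle is not just left undone: the method you propose for it rests on a false premise.

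The missing idea is the K\"onig/Berge decomposition of Lemma~\ref{l:decomposition}. Every simplex of $\Cliq{3}{H_n}$ not in $\Cliq{3}{H_{n-1}}$ is the union of a simplex of $\Cliq{2}{H_{n-1}}$ and a clique of the new square, i.e.\ $\Cliq{3}{H_n}=\Cliq{3}{H_{n-1}}\cup(\Cliq{2}{H_{n-1}}\oplus\Cliq{2}{G_n})$. This describes the link of a new vertex globally in terms of $\Cliq{2}{H_{n-1}}\simeq\bigvee_{n-1}S^1$ (Lemma~\ref{l:clique}). The principle ``a union of two contractible subcomplexes is $\simeq$ the suspension of their intersection'' then gives the homotopy type of the intersection.

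With this in hand, your framework is sound. In the corner case it is exactly the paper's argument: $\mathcal S=\mathrm{st}(x)$ is a cone, and $\mathcal S\cap K$ is at most $2$-dimensional, hence null-homotopic in the $2$-connected $K$. In the edge case your $\mathcal S=\mathrm{st}(x)\cup\mathrm{st}(v)$ is indeed contractible. Since $x$ and $v$ are adjacent, any $\sigma$ with $\sigma\cup\{x\}$ and $\sigma\cup\{v\}$ both simplices has $\sigma\cup\{x,v\}$ a simplex. So $\mathrm{st}(x)\cap\mathrm{st}(v)$ is the star of the edge $\{x,v\}$, a cone. This gives a legitimate alternative to the paper's edge-case route. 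The paper instead glues the non-contractible $L\simeq\Cliq{2}{H_{n-1}}\star S^1$ to $K$ along a contractible intersection.
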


By specializing the attachment mode, we obtain the following corollary:

\begin{corollary}\label{c:edgeseqeunce}
Let $G$ be a square sequence graph with sequence
$
G_1 = H_1 \subseteq H_2 \subseteq \cdots \subseteq H_{n} \cong G
$
obtained solely via edge gluing. For $n\geq 1$, $k\geq 2,$
\[
    \gr{\Cliq{k}{H_n}} \simeq \bigvee_{\binom{n-1}{k-1}} S^{2k-3}.
\]
\end{corollary}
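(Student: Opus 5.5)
The plan is induction on $n$, with an arbitrary fixed $k\ge 2$, where each step attaches one square along an edge. Here $H_n$ is obtained from $H_{n-1}$ by adding two new adjacent vertices $a,b$, with $a$ adjacent to $u$ and $b$ adjacent to $v$ for an edge $uv$ of $H_{n-1}$. Since $H_n$ is bipartite, König's theorem says the independence number of any induced subgraph $G[W]$ is $|W|$ minus the size of a maximum matching of $G[W]$. Hence $W$ is a simplex of $\Cliq{k}{H_n}$ iff $|W|-\nu(G[W])\le k-1$.

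For the base case $H_1=C_4$: this condition shows that $\Cliq{k}{C_4}$ is the full simplex on four vertices for $k\ge 3$, so it is contractible, matching $\binom{0}{k-1}=0$ spheres. For $k=2$ it is the clique complex of $C_4$, which is $S^1=S^{2k-3}$, matching $\binom{0}{1}=1$.

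For the inductive step, I would decompose $\Cliq{k}{H_n}$ as the union of two subcomplexes:
\begin{itemize}
\item $A=\Cliq{k}{H_n - a}\cup \Cliq{k}{H_n-b}$, the faces missing $a$ or $b$;
\item the closed star part of faces containing both $a$ and $b$.
\end{itemize}
First I would note that deleting $b$ makes $a$ a pendant vertex attached to $u$. A pendant vertex always extends a maximum matching unless its neighbour is already matched. From this I expect $\Cliq{k}{H_n-b}$ and $\Cliq{k}{H_n - a}$ to deformation retract, by a fold or link argument, onto $\Cliq{k}{H_{n-1}}$. Their intersection $\Cliq{k}{H_{n-1}}$ is the common retract, so $A\simeq \Cliq{k}{H_{n-1}}$.

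Then I would analyse the faces containing the edge $ab$. Adding $ab$ to $W'\subseteq V(H_{n-1})$ increases both $|W|$ and the matching number by one, unless the edge $ab$ can be better used by rematching through $u$ and $v$. So the link of $\{a,b\}$ is essentially $\Cliq{k}{H_{n-1}}$ together with a correction concentrated near $u,v$. The gluing then gives a cofibre sequence
\[
\Cliq{k}{H_{n-1}} \to \Cliq{k}{H_n} \to \Sigma^2\, \mathcal{L},
\]
where $\mathcal{L}$ is the complex of sets $W'$ with $|W'|-\nu(W')\le k-2$, i.e.\ roughly $\Cliq{k-1}{H_{n-1}}$. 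The double suspension comes from the two new vertices (the join with the edge $ab$ modulo its boundary).

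By induction, $\Cliq{k-1}{H_{n-1}}\simeq\bigvee_{\binom{n-2}{k-2}}S^{2k-5}$, so its double suspension is a wedge of $\binom{n-2}{k-2}$ copies of $S^{2k-3}$. If the inclusion of $\Cliq{k}{H_{n-1}}$ is null-homotopic (or the sequence splits), Pascal's rule $\binom{n-2}{k-1}+\binom{n-2}{k-2}=\binom{n-1}{k-1}$ gives the claim. To get the splitting, I would show that the faces added by the new square cone off nothing from the old complex, i.e.\ $\Cliq{k}{H_n}\simeq \Cliq{k}{H_{n-1}}\vee\Sigma^2\Cliq{k-1}{H_{n-1}}$. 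I would do this by exhibiting the attached part as a double cone over $\mathcal L$ glued along a contractible subcomplex. Alternatively, since everything is simply connected for $2k-3\ge 3$ and the homology is concentrated in a single degree, homology plus Hurewicz and Whitehead suffice; the case $k=2$ can be handled directly as a graph-like complex.

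The main obstacle will be the precise identification of the link of $ab$ with $\Cliq{k-1}{H_{n-1}}$ up to homotopy. The rematching through $u,v$ means the honest link is $\{W' : |W'|-\nu(W'\cup\{a,b\})+1\le k-1\}$, which can be larger than $\Cliq{k-1}{H_{n-1}}$ when $u$ or $v$ lies in $W'$. I would first try to show that this extra part collapses, via a matching-based discrete Morse argument pairing faces by toggling $u$ or $v$, and I expect this is where the edge-gluing hypothesis is essential.
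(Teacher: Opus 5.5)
Your base case is wrong: $\binom{0}{1}=0$, not $1$. So for $k=2$, $n=1$ the formula predicts a point, while $\Cliq{2}{C_4}=C_4\simeq S^1$. This cannot be repaired, because the statement as printed is off by one. A second counterexample is $k=3$, $n=2$. Here $H_2\cong G_{2,3}$, whose only independent $3$-sets are its two disjoint colour classes. Hence $\Cliq{3}{H_2}=\partial\Delta^2\star\partial\Delta^2\simeq S^3$, whereas $\binom{1}{2}=0$.

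Write $c(n,k)$ for the number of spheres. Your recursion $c(n,k)=c(n-1,k)+c(n-1,k-1)$ is right. With the true initial values $c(1,2)=1$ and $c(1,k)=0$ for $k\ge 3$ (equivalently $c(n,2)=n$, the cycle rank of $H_n$), it gives $\bigvee_{\binom{n}{k-1}}S^{2k-3}$. This count is the one consistent with:
\begin{itemize}
\item Lemma~\ref{l:clique};
\item the recursion $\gamma_n=\gamma_{n-1}+(n-1)$, $\gamma_1=0$, in Case~1 of the proof of Theorem~\ref{thm:main2};
\item Theorem~\ref{thm:main} for $G_{2,m}$, which has $m-1$ squares.
\end{itemize}
The paper's own proof has the same defect. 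Its base case cites Lemma~\ref{l:clique}, which gives $n$ circles rather than $n-1$. It also asserts that $\Cliq{k}{H_i}$ is contractible for $i<k$, which fails at $i=k-1$ (the example above).

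For the corrected count $\binom{n}{k-1}$, your route works, and more easily than you fear. The obstacle you flag does not exist. For every $W'\subseteq V(H_{n-1})$ one has $\alpha(W'\cup\{a,b\})=\alpha(W')+1$. Indeed, a maximum independent set of $W'$ misses $u$ or $v$, since they are adjacent. If it misses $u$ you can add $a$, and if it misses $v$ you can add $b$. Conversely, no independent set contains both $a$ and $b$. Hence $\mathrm{lk}(\{a,b\})=\Cliq{k-1}{H_{n-1}}$ exactly. Your claim that the link can be strictly larger is false, and this is precisely where edge gluing enters.

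Your pendant-vertex step is fine: the link of $a$ in $\Cliq{k}{H_n-b}$ is a cone with apex $u$. The splitting needs neither Morse theory nor Whitehead. The closed star of $\{a,b\}$ is contractible. The inclusion of $S^0\star\Cliq{k-1}{H_{n-1}}\simeq\bigvee S^{2k-4}$ into your $A\simeq\bigvee S^{2k-3}$ is null-homotopic, because $A$ is $(2k-4)$-connected. For $k=2$ it is just two points in a connected space. So Lemma~\ref{l:contractionSuspension} gives $\Cliq{k}{H_n}\simeq A\vee\Sigma^2\Cliq{k-1}{H_{n-1}}$.

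The paper argues differently. It writes $\Cliq{k}{H_n}=K\cup L$ with $L=\Cliq{k-1}{H_{n-1}}\oplus\Cliq{2}{G_n}$, using the K\"{o}nig--Berge decomposition lemma, and notes that $K\cap L$ is contractible. It then identifies $L\simeq\Cliq{k-1}{H_{n-1}}\star S^1$ via the Quillen fibre lemma. Your deletion/star decomposition at the new edge replaces the embedded join and the fibre lemma with an exact link computation. The cost is the extra retraction $A\simeq\Cliq{k}{H_{n-1}}$ and a connectivity argument. Both routes yield the same Pascal recursion.
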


Our study of robust clique complexes is further (dually) motivated by the study of total $k$-cut complexes $\Delta^t_k(G)$ introduced in~\cite{BayerDJRSX24} to generalize results of Fröberg~\cite[Theorem~1]{Froberg1990} and Eagon and Reiner~\cite[Proposition~1]{EagonReiner1998} concerning an interplay between graph chordality, the clique complex and $2$-linear resolutions.

 In a series of subsequent works~\cite{BayerDMJRSX24,BayerDMJSX25,ChandrakarHRS24,ShenSYZZ25}, the topological and combinatorial properties of $\Delta^t_k(G)$ were investigated across several graph classes. In particular for grid graphs, in~\cite[Theorem~4.16]{BayerDJRSX24}, the authors determined 
 \[
\gr{\Delta_2^t (G_{m,n})} \simeq \bigvee_{{(m-1)(n-1)}} S^{mn-4} \quad m,n\geq2,
\]
while more recent results~\cite[Theorem~1.1]{ChandrakarHRS24} established
\[
\gr{\Delta_k^t (G_{2,n})} \simeq \bigvee_{\binom {n-1}{k-1}} S^{2n-2k} \quad n,k\geq2; \qquad \gr{\Delta_3^t (G_{3,n})} \simeq \bigvee_{\binom {2n-2}{2}} S^{3n-6}\quad n\geq2.
\]

The key observation for our work is that $k$-robust clique complexes $\Cliq{k}{G}$ are \emph{precisely the Alexander duals} of total $k$-cut complexes $\Delta_k^t(G)$. This implies that their homotopy types are closely linked\footnote{From the perspective of stable homotopy theory, the \emph{stable} homotopy type of one determines the \emph{stable} homotopy type of the other.}, and can be derived from one another. For details on Alexander duality, see~\cite{BjornerT09} and Theorem~\ref{thm:duality}. We showcase the power of this duality by generalizing ~\cite[Theorem~1.1]{ChandrakarHRS24} to arbitrary grids as a corollary of Theorem~\ref{thm:main}:
\begin{corollary}\label{thm:TotalCutMain}
Let $G_{m,n}$ be a  $m \times n$ grid graph, $m, n \geq 2$ and $k\in \{2,3\}$. Then
\[
\gr{\Delta_k^t (G)} \simeq \bigvee_{
    \binom{(m-1)(n-1)}{k-1}} S^{mn - 2k}.
\]
\end{corollary}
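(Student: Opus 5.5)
The plan is to deduce the statement from Theorem~\ref{thm:main} via Alexander duality (Theorem~\ref{thm:duality}). First we check the duality claim made in the introduction. Let $V=V(G)$ with $|V|=N=mn$. The total $k$-cut complex $\Delta_k^t(G)$ has as facets the complements $V\setminus I$ of independent sets $I$ of size $k$. Hence $\sigma\in\Delta_k^t(G)$ exactly when $V\setminus\sigma$ contains an independent set of size $k$. The Alexander dual consists of the sets $\tau$ with $V\setminus\tau\notin\Delta_k^t(G)$. This means $\tau$ contains no independent $k$-set, which is precisely the condition $\tau\in\Cliq{k}{G}$. So $\Cliq{k}{G}=(\Delta_k^t(G))^\vee$ on the ground set $V$, as long as both are proper complexes. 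For $m,n\ge 2$ the grid has an independent $k$-set when $k\le 3$, so $V\notin\Cliq{k}{G}$ and the duality applies.

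Next, combine Alexander duality with Theorem~\ref{thm:main}, which gives $\Cliq{k}{G_{m,n}}\simeq\bigvee_{\beta}S^{2k-3}$ with $\beta=\binom{(m-1)(n-1)}{k-1}$. Alexander duality gives
\[
\tilde H_i(\Delta_k^t(G))\cong \tilde H^{N-i-3}(\Cliq{k}{G}).
\]
The right-hand side is $\mathbb{Z}^{\beta}$ when $N-i-3=2k-3$, i.e.\ $i=mn-2k$, and is zero otherwise. So $\Delta_k^t(G)$ has the homology of $\bigvee_\beta S^{mn-2k}$, free and concentrated in one degree.

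To upgrade homology to homotopy type, we show $\Delta_k^t(G)$ is simply connected when $mn-2k\ge 2$. The facets $V\setminus I$ have dimension $mn-k-1$. Any three vertices lie in a common facet as soon as some independent $k$-set avoids them, so the $2$-skeleton of the full simplex on $V$ lies in $\Delta_k^t(G)$. It suffices to find an independent $k$-set in $G_{m,n}$ minus three vertices, which is routine for grids that are not too small. A simply connected CW complex with free homology concentrated in a single degree $d\ge2$ is homotopy equivalent to a wedge of $d$-spheres, by Hurewicz and Whitehead.

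The main obstacle is the set of small cases where $mn-2k\le 1$ or the simple-connectivity argument fails. These are $G_{2,2}=C_4$ and small grids such as $G_{2,3}$ with $k=3$. We would handle them by direct inspection. For $C_4$ with $k=2$, $\Delta_2^t$ is two disjoint edges, which is $S^0$, matching $\binom{1}{1}=1$ copy of $S^{0}$. With $k=3$ on $C_4$ there is no independent $3$-set, so $\Delta_3^t(C_4)$ is the void complex, consistent with $\binom{1}{2}=0$ copies of $S^{-2}$. Alternatively, the cited results of \cite{BayerDJRSX24,ChandrakarHRS24} for $G_{2,n}$ cover these cases. In dimensions $0$ and $1$, homology alone determines the homotopy type for graphs or discrete sets, after verifying connectivity properties directly.
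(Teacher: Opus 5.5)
Your proposal is correct and follows the paper's route: Alexander duality applied to Theorem~\ref{thm:main} puts all the homology of $\Delta_k^t(G_{m,n})$ in degree $mn-2k$, every $3$-subset avoiding some independent $k$-set gives a full $2$-skeleton and hence simple connectivity, and Hurewicz plus Whitehead finish. The differences are minor: you treat $k=2$ and the small grids ($G_{2,2}$, $G_{2,3}$) directly instead of citing \cite{BayerDJRSX24,ChandrakarHRS24}, and your claim that every $G_{m,n}$ has an independent $3$-set fails for $G_{2,2}$, though your small-case discussion corrects this; your uniform treatment of all cases with $mn-2k\ge 2$ also covers the square grids $m=n\ge 4$, which the paper's restriction to $m>n\geq 3$ formally omits.
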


We remark that the primary advantage of $\Cliq{k}{G}$ lies in their low dimensionality. This makes the complexes more combinatorially tractable than the associated cut complexes, allowing for an inductive analysis that avoids the more involved combinatorial methods required in~\cite{ChandrakarHRS24}.

Naturally, we ask whether Theorem~\ref{thm:main} holds for $k > 3$. In general, the specific enumerative formula does not extend, as showcased in Example~\ref{ex:gridformula}. However, we propose the following generalization of Theorem~\ref{thm:main2}:
\begin{conjecture}
For all $k\geq2,n\geq 1$, the $k$-robust clique complex $\Cliq{k}{G}$ of graph $G$ with a square sequence $C_4 \cong H_1 \subset  H_2 \subset \cdots \subset H_n \cong G$ is homotopy equivalent to a wedge sum of ($2k-3$)-dimensional spheres. Further, the number of spheres  is determined by a recurrence relation based on the homotopy type of $\Cliq{k}{H_{n-1}}$ and the specific attachment of the $n$-th square.
\end{conjecture}
We suspect that the specific formula in Theorem~\ref{thm:main} remains viable when $k$ is sufficiently small relative to the independence number $\alpha(G_{m,n})$. Our current technical bottleneck involves the complexity of the recursive step in the proof of Theorem~\ref{thm:main2} for $k > 3$ and corner gluing. Finally, we note that if $m=1$ or $n=1$, the grid graph $G_{m,n}$ reduces to a path. Here, the topology of total cut complexes of paths were determined in~\cite{BayerDJRSX24}, and the topology of robust clique complexes can be deduced via Alexander duality.

We now present a shorthand summary of the main steps of the proof of Theorem~\ref{thm:main2} and its corollaries. Full proofs are postponed until Section~\ref{s:Proofs} and we provide necessary preliminaries in Section~\ref{s:Preliminaries}. 

\subsection*{The proof of Theorem~\ref{thm:main2} in a nutshell:}
The proof of Theorem~\ref{thm:main2} proceeds by induction on the length of the square sequence. The case $k=2$ is straightforward (Lemma~\ref{l:clique}) and serves as the base for the  inductive step for $k=3$.

We assume the homotopy type of $\Cliq{k}{H_i}$ is known for $1 \leq i < n$ and compute the homotopy type of $\Cliq{k}{H_{i+1}}$. To this end, we first utilize Lemma~\ref{l:decomposition}. This Lemma, based on a  variant of K\"{o}nig's Theorem and Berge's Lemma (Lemma~\ref{l:edge_removal}), allows us to characterize $\Cliq{k}{H_{i+1}}$ as the union of two subcomplexes:
\[
\Cliq{k}{H_{i+1}} = K \cup L, \quad \text{where } K = \Cliq{k}{H_i} \text{ and } L = \Cliq{k-1}{H_i} \oplus \Cliq{2}{C_4}.
\]
Here, $\oplus$ denotes the \emph{embedded join} (Definition~\ref{d:join}). While this construction has appeared in algebraic contexts~\cite{SimisU2001,ChoeJ2025}, from which we borrow the name\footnote{Alternatively, one can say that it is a "simplex-wise union".}, we believe this might be its first appearance in the combinatorics of simplicial complexes. For us, it is important that the embedded join behaves up to homotopy like the standard join if the intersection of the factors is a full simplex (see Definition~\ref{d:join} and Lemma~\ref{l:joinsimplex} for details).

Inductively, we assume that $K$ has the homotopy type of a wedge sum of $3$-spheres, and we analyze the homotopy type of this union based on the intersection $K \cap L$:
\begin{itemize}
    \item \textbf{Edge Attachment:} When $C_{4}$ is attached along an edge, Lemma~\ref{l:joinsimplex} shows that the intersection $K \cap L$ is contractible. Consequently, by Lemma~\ref{l:contractionSuspension}, $K\cup L$ is homotopy equivalent to the wedge sum $K \vee L$. 
    Because we are edge gluing, Lemma ~\ref{l:joinsimplex} implies that embedded join has the same homotopy as the standard join, hence 
    \[
    \gr{L} \simeq \gr{\Cliq{k-1}{H_i} \star \Cliq{2}{C_4}} \simeq \gr{\Cliq{k-1}{H_i} \star S^1}.\] 
    Since $k = 3$, $\Cliq{k-1}{H_i}$ has the homotopy type of a wedge sum of spheres $S^1$ by Lemma~\ref{l:clique} and thus $\gr{L}\simeq \bigvee_\beta S^3$ for some $\beta\geq 0$.
    \item \textbf{Corner Attachment:} When $C_{4}$ is attached along a path of length $2$, we use a slightly different decompostion: $K$ remains the same while \[ L = \{\sigma \in \Cliq{k}{H_i} \mid \sigma \cup \{x\} \in \Cliq{k}{H_i}\}. \] One can now see that $L$ is topologically simpler - it is a simplicial cone $L \cong C(K\cap L)$. Further, we can show that $K \cap L$, while not contractible itself, is still \emph{contractible as a subcomplex} of $K$. Then Lemma~\ref{l:contractionSuspension}, gives $ \gr{K\cup L} \simeq \gr{K \vee \Sigma(K \cap L)}$. Since this is not sufficient, we use the fact $k = 3$ and study the homotopy type of $(K \cap L)$ using a homotopy pushout--style argument and we are  able to show that $\gr{K\cap L} \simeq \Sigma (\bigvee_\beta S^1)$, hence  $\gr{\Cliq{k}{H_{i+1}}} \simeq  \gr{K} \vee \Sigma^2 (\bigvee_\beta S^1)\simeq \gr{K} \vee \bigvee_\beta S^3$.
\end{itemize}
 
To prove Theorem~\ref{thm:main}, we apply Theorem~\ref{thm:main2} to a chosen square sequence of the grid $G_{m,n}$. We notice that the recursive formula for computing the homotopy type of $\gr{\Cliq{k}{H_{i+1}}}$ is the same regardless of the type of attachment and the result then follows from the binomial formula.

For Corollary~\ref{c:edgeseqeunce}, we observe that the argument in the proof of Theorem~\ref{thm:main2} for edge attachments never really uses $k=3$: the homotopy type of $\Cliq{k-1}{H_{i}}$ can be taken to be a wedge sum of spheres $S^{2(k-1) - 3}$ via the induction assumption.

Finally, to prove Corollary~\ref{thm:TotalCutMain}, we use Theorem~\ref{thm:main} and apply Alexander Duality (Theorem~\ref{thm:duality}). The duality implies that the (co)homology of the total cut complex $\Delta_k^t$ is concentrated in dimension $mn-2k$. Thus $\Delta_k^t$ has the same \emph{(co)homology} type as a wedge sum of spheres $S^{mn-2k}$. Since $\Delta_k^t$ is further simply connected, which we show by a simple combinatorial argument, a version of the Whitehead Theorem (Theorem~\ref{t:whitehead}) confirms that $\Delta_k^t$ is homotopy equivalent to a wedge of spheres.

\section{Preliminaries}\label{s:Preliminaries}
This section is divided into two parts. In the first part, we recall basic definitions concerning graphs and simplicial complexes, define robust clique complexes, square sequence graphs and we conclude the subsection with two technical lemmas regarding decompositions of robust clique complexes.

In the second part, we summarize essential notions from algebraic topology required in Section~\ref{s:Proofs}. Most of the material presented here is standard; for a more comprehensive treatment, we refer the reader to~\cite{Jonsson08,Sti12, Hatcher2002,Kozlov2008CAT} and~\cite{matousek2003}.

\subsection*{Discrete notions}
By graph we always mean a \emph{simple graph} with no multiedges or loops.
\begin{definition}\label{d:graph}
Let $G(V,E)$ be a graph, $W \subseteq V$. By $G[W]$ we denote the induced graph on the subset of vertices, i.e. $V(G[W]) = W$ and $\{v,w\} \in E(G[W])$ if $\{v,w\}\in E$ and $v,w\in W$.
\begin{itemize}
    \item We call $W$ a clique if $G[W]$ is the complete graph , i.e. an edge connects any two vertices of $W$.
    \item We call $W$ an independent set of size $|W|$ if $G[W]$ contains no edges. The set of independent sets of size $k$ is denoted $\mathcal{I}_k$.
    \item We call the size of the largest independent set in $G$ the \emph{independence number of} $G$ and denote it by $\alpha(G)$.
\end{itemize}
\end{definition}

We now formally define grid graphs: 
\begin{definition}[Grid graph]
    Let $m,n \in \mathbb{N}$. The grid graph $G_{m,n}$ is defined as follows: 
    \begin{align*}
    V(G_{m,n}) &= \{(i,j) \mid 1\leq i\leq m,  1\leq j\leq n \},\\    
    E(G_{m,n}) &= \{(i,j), (i,j+1) \mid 1\leq i\leq m,  1\leq j\leq (n-1) \}\\        
    &\cup \{(i,j), (i+1,j) \mid 1\leq i\leq (m-1),  1\leq j\leq n \}.
\end{align*}
\end{definition}
In this paper, we always consider a path not to be a grid graph, i.e. $m, n\geq2$.

We now introduce the family of \emph{square sequence graphs}:
\begin{definition}\label{d:sqseq}
We call a graph $G$ a \emph{square sequence graph of length $n$}, if there exists a sequence of subgraphs called the \emph{square sequence}
\[
G_1 = H_1 \subseteq H_2 \subseteq \cdots \subseteq H_n \cong G
\]
and subgraphs $G_i\subseteq H_i$ such for all $1\leq i \leq n$, the following conditions hold: 
    \begin{enumerate}
        \item $G_i \cong C_4, 1\leq i \leq n$. 
        \item $H_{i+1} = H_{i} \cup G_{i+1}$ for all $1\leq i < n$;
        \item $H_{i} \cap G_{i+1}$ is isomorphic to a path of length $1$ (We call this \emph{edge attachment/gluing}) or a path of length $2$ (We call this \emph{corner attachment/gluing}). 
    \end{enumerate}
\end{definition}

Whenever we analyse a situation when a square $G_i$ is being attached to $H_{i-1}$, we denote the vertices $V(G_i) = \{x,y,u,v\}$ and  $E(G_i) = \{\{x,u\}, \{x,v\}, \{y,u\},\{y,v\}\}$. For edge gluing we assume $E(H_{i-1} \cap G_{i}) = \{\{u,y\}\}$ and for corner gluing $E(H_{i-1} \cap G_{i}) = \{\{u,y\} , \{v,y\}\}$, see Figure~\ref{fig:attach} for illustration. We believe this will not cause confusion since we never mix multiple attachments at the same time.

\begin{figure}[ht]
    \centering
    \includegraphics[width=0.4\textwidth]{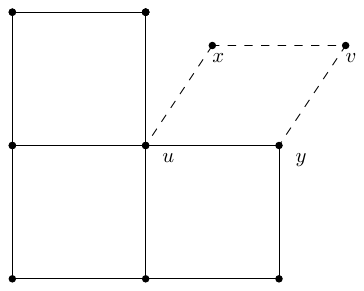} 
    \includegraphics[width=0.4\textwidth]{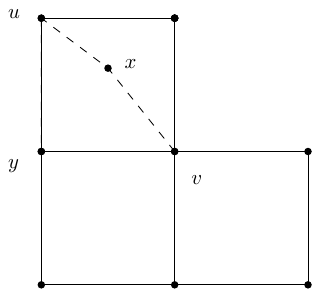}
    \caption{Edge and corner gluing}
    \label{fig:attach}
\end{figure}

Since our main object of study are simplicial complexes, we briefly remind of a number of basic definitions and constructions. For further details, we refer to~\cite[Section 1.3 - 1.5]{matousek2003}:

Let $V$ be a finite set, Then $K \subseteq 2^V$ is called a \emph{(finite, abstract) simplicial complex} if  $\{v\} \in K, \forall v\in V$ and if it is downward closed under inclusion: $B\subseteq A \in K$ implies $B \in K$. The elements $A\in K$ are called \emph{simplices}, the \emph{dimension} $\dim(A)$ of a simplex $A\in K$ is $\dim(A) = (|A| - 1)$, where $|A|$ is the number of elements of $A$ and the \emph{dimension} of $K$ is $\dim(K) = \max_{A\in K}\{\dim(A)\mid A\}$. The set $V$ is the \emph{vertex set} of $K$ and if $K = 2^V$, we say that $K$ is a $|V|-1$-simplex and use the notation $K \cong \Delta^{|V|-1}$.

A \emph{simplicial map} between simplicial complexes $K$ and $L$ is the map of their vertex sets that preserves simplices $A\in K \Rightarrow f(A) \in L$.


We now define the main object of our study, originally introduced in~\cite{KimL2022}:
\begin{definition}\label{def:myClique}
For $k\geq 2$, the robust clique complex $\Cliq{k}{G}$, of a graph $G$ is 
\[
\Cliq{k}{G} = \{ W \subseteq V(G) \mid W \not \supseteq \sigma, \forall \sigma \in \mathcal{I}_k(G)\}.
\]
\end{definition}
We remark that if $k>\alpha(G)$, then $\Cliq{k}{G} = 2^{V(G)}$. 

\begin{figure}[ht] 
    \centering
    \includegraphics[width=0.6\textwidth]{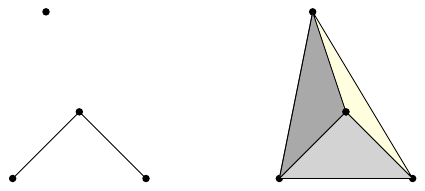} 
    \caption{The robust $3$-clique complex of the graph on the left consists of three $2$-simplices.}
    \label{fig:ex_cliq}
\end{figure}

\begin{example}\label{ex:gridformula}
Consider $\Cliq{6}{G_{4,3}}$. The independence number of $G_{4,3}$ is $6$ and one can see that there are exactly two disjoint independent sets $A,B$ of size $6$ in $G_{4,3}$. The complex $\Cliq{6}{G_{4,3}}$ consists exactly of those subsets of vertices that do not contain $A$ or $B$. These can be characterized as joins of the boundaries of two $5$-simplices $\partial \Delta^5 \star \partial \Delta^5$, which is homotopy equivalent to $S^4 \star S^4 \simeq S^9$ and thus not a wedge of $6$ copies of $S^9$ as suggested by the formula in Theorem~\ref{thm:main}.
\end{example}

Direct computation of the homotopy type of $\Cliq{k}{G_{m,n}}$ as in the example above is always available when $k$ is equal to the independence number $\alpha(G_{m,n})$ of the grid graph $G_{m,n}$ and the result is either a point (if $mn$ is odd) or a single sphere ($mn$ even). This follows from the fact that there are either one or two maximum independent sets in $G_{m,n}$ depending on the parity of $m$ and $n$.

\begin{definition}
The \emph{combinatorial Alexander dual} of a simplicial complex $K$ with vertex set $V$ is a simplicial complex $K^\AD$ on the same vertex set with 
$K^\AD = \{ W \subseteq V \mid V \setminus W \not\in K
\}$.
\end{definition}
In~\cite{BayerDJRSX24}, the authors defined the  total cut complex $\Delta_k^t (G)$ as
\[
\Delta_k^t (G) = \{ W\subseteq V(G) \mid \exists~\sigma \in \mathcal{I}_k, \sigma \subseteq (V(G)\setminus W)\}.
\]
Hence the Alexander dual of a total $k$-cut complex is exactly the $k$-robust clique complex: ${\Delta_k^t (G)}^\AD = \Cliq{k}{G}$.




In our computations, we will make use of the following join and embedded join constructions for simplicial complexes.
\begin{definition}\label{d:join}
The \emph{join} of simplicial complexes $K$ and $L$ is the complex $K \star L$ with
$$V(K \star L) = V(K)\times \{0\} \cup V(L)\times\{1\}$$
$$\sigma \in K \star L \iff \sigma = \alpha\times\{0\} \cup \beta\times\{1\} \quad \text{where } \alpha \in K, \beta \in L.$$
their \emph{embedded join} is given by
$$V(K \oplus L) = V(K) \cup V(L)$$
$$\sigma \in K \oplus L \iff \sigma = \alpha \cup \beta \quad \text{where } \alpha \in K, \beta \in L.$$
\end{definition}
Clearly, join and embedded join coincide if $V(K) \cap V(L) = \emptyset$. We will see later that in case the intersection of the two complexes is nice (e.g. a simplex), the embedded join behaves up to homotopy like the standard join (Lemma~\ref{l:joinsimplex}).

 The join $K\star a$ of a simplicial complex $K$ and a $0$-simplex $a$ is called a \emph{cone} and denoted $CX$ and the join of $K$ with two $0$-simplexes $a$,$b$ is called a \emph{suspension} and denoted $\Sigma X$.



We are now ready to state two technical lemmas. The first follows from classical results of König and Berge applied in bipartite graphs.
\begin{lemma}[An application of König's theorem]\label{l:edge_removal}
Let $k \geq 2$, let $G$ be a bipartite graph and let $W \in \Cliq{k+1}{G}$ such that $W \not\in \Cliq{k}{G}$. Let $M$ be a maximal matching on $G[W]$. Then for every edge $e \in M$, $W \setminus e \in \Cliq{k}{G}$.
\end{lemma}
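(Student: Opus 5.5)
The plan is to reduce everything to the identity $\alpha(H) = |V(H)| - \nu(H)$ for bipartite graphs $H$, where $\nu$ denotes the size of a maximum matching. This follows from Gallai's identity $\alpha(H) + \tau(H) = |V(H)|$ (complements of vertex covers are independent sets) combined with K\"{o}nig's theorem $\tau(H) = \nu(H)$. Isolated vertices cause no trouble, since they enter neither side of a matching or cover. Throughout, I read ``maximal matching'' in the statement as a \emph{maximum} matching.

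This reading is necessary. For the path $a - b - c - d$ with $k=2$, we have $\alpha = 2$, so $W = \{a,b,c,d\} \in \Cliq{3}{G} \setminus \Cliq{2}{G}$. The inclusion-maximal matching $\{\{b,c\}\}$ leaves $\{a,d\}$, which is independent, so $W \setminus \{b,c\} \notin \Cliq{2}{G}$.

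\textbf{Step 1: translate the hypothesis.} $W \in \Cliq{k+1}{G}$ means $G[W]$ has no independent set of size $k+1$. $W \notin \Cliq{k}{G}$ means it has one of size $k$. Hence $\alpha(G[W]) = k$, and since $G[W]$ is bipartite (as an induced subgraph of a bipartite graph), $\nu(G[W]) = |W| - k$.

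\textbf{Step 2: remove an edge of $M$.} Fix $e = \{a,b\} \in M$ and set $W' = W \setminus e$. The set $M \setminus \{e\}$ is a matching in $G[W']$, so $\nu(G[W']) \geq \nu(G[W]) - 1$. Conversely, any matching $N$ of $G[W']$ avoids $a$ and $b$, so $N \cup \{e\}$ is a matching in $G[W]$. Therefore $\nu(G[W']) \leq \nu(G[W]) - 1$, and equality holds.

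\textbf{Step 3: conclude.} Applying the K\"{o}nig identity to the bipartite graph $G[W']$ gives
\[
\alpha(G[W']) = (|W|-2) - (\nu(G[W]) - 1) = |W| - \nu(G[W]) - 1 = k - 1 < k.
\]
So $G[W']$ contains no independent set of size $k$, i.e.\ $W' \in \Cliq{k}{G}$.

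The only point needing care is the maximality issue addressed at the start; beyond that, the argument is a direct application of K\"{o}nig's theorem.
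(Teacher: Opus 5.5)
Your proof is correct and follows the paper's argument: Gallai's identity combined with K\"{o}nig's theorem gives $\alpha(G[W]) = |W| - \nu(G[W])$, and removing an edge of a maximum matching lowers $\nu$ by exactly one. The paper attributes that last step to Berge's lemma, whereas you verify it directly. Your reading of ``maximal'' as \emph{maximum} is what the paper's proof actually uses, and your $P_4$ example correctly shows that the statement is false for inclusion-maximal matchings.
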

\begin{proof}
We begin with the fact that in bipartite graphs, the following equality (Gallai's formula) holds: 
\[
\alpha(G[W]) + \tau(G[W]) = |W|.
\]
here $\tau(G[W])$ is the size of a minimum vertex cover\footnote{The vertex cover of a graph is set of vertices $C \subset V(G)$ such that any edge in $G$ contains a vertex in $C$.} of $G[W]$. We know that $\alpha(G[W]) = k$ by our assumptions on $W$.

Since $G$ is bipartite, $G[W]$ is also bipartite and König's theorem~\cite{konig1916} implies that $\tau(G[W])$ is equal to the size of the maximum matching in $G[W]$. Thus let $M \subseteq E(G[W])$ be a maximum matching in $G[W]$. By Berge's Lemma~\cite{berge1957}, for any an edge $e = \{u,v\} \in M$, $M' = M \setminus e$ is a maximum matching in $G' = G[W \setminus e]$. Using the equality above again and substituting gives us
\begin{align*}
\alpha(G') + \tau(G') &= |W| - 2. \\
\alpha(G') + \tau(G[W]]) - 1 &= |W| - 2. \\
\alpha(G') = \alpha(G[W]) - 1 &= k-1.
\end{align*}
Hence $G'$ does not admit an independent set of size $k$ and $W \setminus e  \in \Cliq{k}{G}$.
\end{proof}

Finally, we present the decomposition Lemma for robust clique complexes of square sequence graphs:

\begin{lemma}[Decomposition of a  $k$-robust clique complex of a square sequence graph]\label{l:decomposition}
   Let $C_4 \cong H_1 \subseteq \cdots \subseteq H_n \cong G$ be a square sequence of a graph $G$ and let $k>2$. Denote $K = \Cliq{k}{H_{n-1}}$ and $L = \Cliq{k-1}{H_{n-1}} \oplus \Cliq{2}{G_{n}}$. Then 
   \begin{align*}
   \Cliq{k}{H_{n}} = \Cliq{k}{H_{n-1}} \cup (\Cliq{k-1}{H_{n-1}} \oplus \Cliq{2}{G_n}) = K \cup L.
    \end{align*} 
Further,
\begin{align*}
K \cap L = \Cliq{k-1}{H_{n-1}} &\oplus \Cliq{2}{G'_{n}},
\end{align*}
where $G'_{n} = G_n[V(G_{n}) \setminus \{x\}]$ in case of corner gluing and $G'_{n} = G_{n}[ V(G_n) \setminus \{x,v\}]$ in case of edge gluing.

\end{lemma}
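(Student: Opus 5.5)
We prove the two equalities separately: first the decomposition $\Cliq{k}{H_n}=K\cup L$ by checking both inclusions, and then the formula for $K\cap L$.

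Throughout, write $V_{\mathrm{old}}=V(H_{n-1})$ and use the vertex labels fixed after Definition~\ref{d:sqseq}. The new vertices are $\{x\}$ for corner gluing and $\{x,v\}$ for edge gluing. The key structural fact is that $H_n$ is the union $H_{n-1}\cup G_n$. Hence the induced subgraph of $H_n$ on old vertices equals that of $H_{n-1}$, and the only new edges are edges of $G_n$. In particular $\Cliq{j}{H_n}$ restricted to subsets of $V_{\mathrm{old}}$ is exactly $\Cliq{j}{H_{n-1}}$. Also $\Cliq{2}{G_n}$ consists of the empty set, the vertices and the four edges of the $4$-cycle, so every $\beta\in \Cliq{2}{G_n}$ is a clique of size at most $2$.

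\emph{Inclusion $K\cup L\subseteq \Cliq{k}{H_n}$.} For $K$ this is the restriction remark above. For $\sigma=\alpha\cup\beta\in L$, let $I$ be an independent set of $H_n[\sigma]$. Since $\beta$ is a clique, $|I\cap\beta|\le 1$. The set $I\setminus\beta\subseteq\alpha$ is independent in $H_{n-1}$, so it has size at most $k-2$. Hence $|I|\le k-1$ and $\sigma\in\Cliq{k}{H_n}$.

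\emph{Inclusion $\Cliq{k}{H_n}\subseteq K\cup L$.} Let $W\in\Cliq{k}{H_n}$. If $W\subseteq V_{\mathrm{old}}$, then $W\in K$. Otherwise we split into two cases.

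\textbf{Case 1: edge gluing with $\{x,v\}\subseteq W$.} Take $\beta=\{x,v\}$ and $\alpha=W\setminus\{x,v\}$. Suppose $\alpha$ contained an independent set $I$ of size $k-1$. Since $u$ and $y$ are adjacent, $I$ cannot contain both of them. If $u\notin I$, then $I\cup\{x\}$ is independent, because $N(x)=\{u,v\}$. If $y\notin I$, then $I\cup\{v\}$ is independent, because $N(v)=\{x,y\}$. Either way $W$ would contain an independent set of size $k$, which is impossible. So $\alpha\in\Cliq{k-1}{H_{n-1}}$.

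\textbf{Case 2: exactly one new vertex $z\in W$.} Here $z=x$ in corner gluing, and $z\in\{x,v\}$ in edge gluing. If $W\setminus\{z\}\in\Cliq{k-1}{H_{n-1}}$, take $\beta=\{z\}$. Otherwise $\alpha(W\setminus\{z\})=k-1=\alpha(W)$, where $\alpha(W)$ denotes the independence number of $H_n[W]$. By Gallai's identity together with König's theorem, as in the proof of Lemma~\ref{l:edge_removal}, the matching number of $H_n[W]$ drops by one when $z$ is removed. Therefore every maximum matching $M$ of $H_n[W]$ covers $z$, say by the edge $e=\{z,w\}$. All neighbours of $z$ lie in $V(G_n)$, so $e\in\Cliq{2}{G_n}$. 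Now $W\in\Cliq{k}{H_n}\setminus\Cliq{k-1}{H_n}$, and Lemma~\ref{l:edge_removal} applied with $k-1\ge 2$ (this is where $k>2$ is used) gives $W\setminus e\in\Cliq{k-1}{H_n}$. The set $W\setminus e$ consists of old vertices only, so it lies in $\Cliq{k-1}{H_{n-1}}$. Hence $W=(W\setminus e)\cup e\in L$.

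\emph{The intersection.} A simplex of $K\cap L$ is a set $\alpha\cup\beta\in L$ that contains no new vertex. Then $\beta$ avoids the new vertices, so $\beta\in\Cliq{2}{G'_n}$; this is the path $u$--$y$--$v$ for corner gluing and the edge $uy$ for edge gluing. Conversely, every set in $\Cliq{k-1}{H_{n-1}}\oplus\Cliq{2}{G'_n}$ lies in $L$ and consists of old vertices. By the first inclusion it lies in $\Cliq{k}{H_n}$, and hence in $K$ by the restriction remark.

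The main obstacle is Case 2, namely showing that a single new vertex can be stripped off together with one incident edge. This is exactly where the bipartite matching argument (König and Berge, via Lemma~\ref{l:edge_removal}) is needed.
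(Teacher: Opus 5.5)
Your proof is correct and follows the paper's route: you check both inclusions, use Lemma~\ref{l:edge_removal} to strip a new vertex together with an incident edge of $G_n$, and then read off $K\cap L$ as the simplices of $L$ containing no new vertex. The deviations only tighten the argument. For edge gluing with $\{x,v\}\subseteq W$ you give a direct neighbourhood argument, where the paper asserts without proof that $\{x,v\}$ lies in a maximum matching. For a single new vertex $z$ you split on whether $W\setminus\{z\}\in\Cliq{k-1}{H_{n-1}}$ and use the Gallai--K\"onig count to show that every maximum matching covers $z$, which supplies the justification the paper omits when it claims an edge at $x$ lies in some maximum matching.
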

\begin{proof}
$\supseteq$: $\Cliq{k}{H_{n}}$ contains $\Cliq{k}{H_{n-1}}$ since $H_{n-1} \subseteq H_n$. Next, let $W \in \Cliq{k-1}{H_{n-1}}$. Since $W$ contains no independent set of size $k-1$, $W \cup e$, where $e\in E(H_{n}) \supseteq E(G_n)$, does not contain an independent set of size $k$.

$\subseteq$: Let $W\in \Cliq{k}{H_{n}}$ be such that $W \not\in\Cliq{k}{H_{n-1}}$ (otherwise we are done) and let $\alpha(G[W]) = \ell \geq (k-1)$.
We distinguish whether $G_n$ is attached to $H_n$ by corner gluing or edge gluing. We remind the reader, that in both cases, $x$ is the vertex of $G_n$ that is not contained in $H_{n-1}$ and $v$ is the vertex of $G_n$ that is contained in $H_{n-1}$ in case of corner gluing and not contained in $H_{n-1}$ in case of edge gluing. In any case, the proof proceeds by applying Lemma~\ref{l:edge_removal} to $W$ and showing that the resulting set is contained in $\Cliq{k-1}{H_{n-1}}$.

\textbf{$G_n$ is attached to $H_{n-1}$ by corner gluing:} Since $W \not\in\Cliq{k}{H_{n-1}}$, we must have $x\in W$. If $u,v \not\in W$, then $x$ must be contained in every largest independent subset of $W$, hence $\alpha(G[W \setminus \{x\}]) = \ell-1$ and it follows that $W \setminus \{x\} \in \Cliq{k-1}{H_{n-1}}$. If $W$ contains $x$ and either $u$ or $v$ (or both), then one of the edges $\{u,x\},\{v,x\}$ is a part of a maximal matching on $H_n[W]$. Without a loss of generality, let it be $\{u,x\}$. By Lemma~\ref{l:edge_removal}, we have $\alpha(G[W \setminus \{u,x\}]) < \alpha(G[W])$, thus $W \setminus \{u,x\} \in \Cliq{k-1}{H_{n-1}}$.

\textbf{$G_n$ is attached to $H_{n-1}$ by edge gluing:}
Since $x,v \not \in V(H_{n-1})$ and $W \not\in\Cliq{k}{H_{n-1}}$, at least one of the vertices $x$ or $v$ must be in $W$. If both are in $W$, then $W \setminus \{v,x\} \in \Cliq{k-1}{H_{n-1}}$ by Lemma~\ref{l:edge_removal}. Let  only one of $x,v$ be in $W$ and suppose without a loss of generality that it is $x$. If $u\in W$, the edge $\{x,u\}$ is in a maximum matching in $W$ thus removing $\{x,u\}$ from $W$ gives us a set in $\Cliq{k-1}{H_{n-1}}$ by Lemma~\ref{l:edge_removal}. If $u\not\in W$, $x$ is disconnected in $H_{n-1}[W]$, therefore $x$ belongs to every independent set of maximal size in $W$. Removing $x$ from $W$ then gives us a set in $\Cliq{k-1}{H_{n-1}}$.

Finally, the intersection  follows from the definition of the $k$-clique complex and the fact that $G'_{n} = G_n[V(G_{n}) \setminus \{x\}] \cap H_n $ in case of corner gluing and $G'_{n} = G_{n}[ V(G_n) \setminus \{x,v\}] \cap H_n$ in case of edge gluing.
\end{proof}

\subsection*{Topological preliminaries}
To make the main ideas of the proof of Theorem~\ref{thm:main2} eligible, we remind of basic results from algebraic topology. However some details including the rudimentary definitions such as topological space, continuous map, $n$-sphere $S^n$ and $n$-disc $D^n$ are omitted. For full details, we refer the reader to textbooks~\cite{Hatcher2002,Kozlov2008CAT} and~\cite{matousek2003}.

We first define a geometric realization of a simplicial complex. Intuitively, one can interpret the simplices of a simplicial complex $K$ as geometric objects, i.e. $A\in X$ can be seen as a $\dim(A)$-simplex: a convex hull of $d = \dim(A) + 1$ affine independent points in $\mathbb{R}^d$. Thus $0$-simplex is a point, $1$-simplex a line segment $2$-simplex a triangle etc. The geometric realization $\gr{K}$ is then obtained by gluing the simplices using the data from $K$. 

The intuitive definition can be made precise, however, we use the following definition that is shorter and remark that both definitions coincide. Further details can be found in the short exposition~\cite{nLabscpx} and in \cite[Section~1.3]{matousek2003}.

\begin{definition}
Let $K$ be a simplicial complex on the vertex set $V(K)$, where $d = |V(K)|$. We define a topological space $\gr{K}$ called the \emph{geometric realization} of $K$ as a set of functions $\alpha \colon V(K) \to [0,1]$, to the closed interval $I = [0,1]$, such that
\begin{itemize}
    \item if $\alpha \in \gr{K}$, the set $\{V(K), \alpha(v)\neq 0\}$ is a simplex of $K$.
    \item for each $\alpha \in \gr{K}$,  $\sum_{v \in V(K)} \alpha(v) = 1$.
\end{itemize}
\end{definition}
The simplicial map of simplicial complexes $K \to L$ further induces continuous map of their geometric realizations in an obvious way. In this text we frequently do not distinguish between simplicial complexes and their geometric realizations, as it should be clear from the context.

In this text, every space and especially every geometric realization of a simplicial complex will be a CW complex. A \emph{CW complex} is a space $X$ together with a increasing sequence of subspaces (called a \emph{filtration}) 
\[
  X_0 \subseteq X_1 \subseteq X_2 \subseteq \cdots\subseteq X,
\]
with the following properties: $X_0$ is a discrete set of points (called \emph{vertices} or \emph{$0$-dimensional cells}) and $X_{i+1}$ is constructed by attaching a set of $(i+1)$-dimensional discs $D^{i+1}_\alpha$ to $X_i$ along their boundary $\partial D^{i+1}$ via continuous maps $g_\alpha \colon \partial D^{i+1} _\alpha = S^{i} _\alpha \to X_i$. Thus
\[
  X_{i+1} = (X_i \cup \coprod_\alpha D^{i+1}_\alpha) / \sim
\]
where $\sim$ identifies $g_\alpha(x)\in X_i$ with $x\in \partial D^{i+1}_\alpha$. Finally, the topology on $X = \bigcup_n X_n$ is the so-called \emph{weak topology} (i.e., a set $U\subseteq X$ is open if and only if $X\cap X_i$ is open in $X_i$ for every $i$). The subspace $X_i$ is called the $i$-dimensional skeleton of $X$.

\begin{definition}[Homotopy - basic notions]
Let $f,g\colon X \to Y$ be continuous maps of topological spaces. we say that $f$ and $g$ are \emph{homotopic} and denote this by $f \sim g$ if there exists a continuous map $H \colon [0,1] \times X \to Y$ such that $H|_{\{0\}\times X} = f$ and $H|_{\{1\}\times X} = g$.

We say that spaces $X,Y$ are homotopy equivalent (or that $X,Y$ have the same \emph{homotopy type}) and denote this $X \simeq Y$ if there are maps $f\colon X\to Y$ and $g \colon Y \to X$ such that $\mathsf{id}_X \sim gf$ and $\mathsf{id}_Y \sim fg$. Further $f,g$ are called homotopy equivalences. 

By $[f]$ we denote the \emph{homotopy class}- a set of maps $f'$ with $f' \sim f$ and by 
\[[X,Y] = \{[f] \mid f\colon X \to Y\},\] we denote \emph{the set of homotopy classes of maps}.
\end{definition}

We call a space $X$ \emph{path connected} if for every $x_0, x_1 \in X$ there exists a continuous map $H \colon [0,1] \to X$ such that $H(0) = x_0$ and $H(1) = x_1$.

A space $X$ is further \emph{contractible} if it is homotopy equivalent to a point, or more precisely, there is a homotopy equivalence between $\mathsf{id}_X$ and a constant map to some $x\in X$.

A subspace $A\subseteq X$ is contractible in $X$ if the inclusion map $i\colon A \to X$ is homotopic to a constant map.

Given two spaces (CW complexes) $X$, $Y$ and points $x\in X_0, y\in Y_0$, their \emph{wedge sum} is $X \vee Y$ is $X\sqcup Y/\sim$, where $x_0\sim y_0$. If both $X$ and $Y$ are path connected, the homotopy type of the space $X \vee Y$ does not depend on the choice of $x_0, y_0$.

\subsubsection*{Homotopy groups, (co)Homology and Whitehead theorem}

The $n$-th homotopy group of a path connected space $X$ is the set $\pi_n(X) = [S^n, X]$ i.e. the set of homotopy classes of continuous maps from the $n$-sphere $S^n$ to $X$. The set $\pi_0(X)$ is the set of connected components\footnote{Here we do not require $X$ to be path connected.} of $X$ and $\pi_1(X)$ is the \emph{fundamental group} of $X$. We call a path connected CW complex $X$ \emph{simply connected} if $\pi_1(X) = 0$. We call a map $f\colon X \to Y$ a \emph{weak homotopy equivalence} if the induced map $\pi_n(f)\colon \pi_n(X) \to \pi_n(Y)$ is an isomorphism for all $n\geq 0$.

When dealing with CW complexes, we can utilise the following result by Whitehead that says that for CW complexes weak homotopy equivalence induces homotopy equivalence 
\begin{theorem}
    Let $f\colon X \to Y$ be a weak homotopy equivalence of CW complexes. Then $f$ is a homotopy equivalence.
\end{theorem}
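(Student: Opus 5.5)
The plan is to follow the classical mapping-cylinder argument, as in Hatcher, and reduce the statement to a compression lemma for CW pairs.

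\textbf{Reduction to an inclusion.} By cellular approximation I first replace $f$ by a homotopic cellular map. This changes neither the induced maps on $\pi_n$ nor whether $f$ is a homotopy equivalence. For cellular $f$, the mapping cylinder $M_f = (X\times[0,1]) \sqcup Y/\bigl((x,1)\sim f(x)\bigr)$ is a CW complex containing $X = X\times\{0\}$ as a subcomplex. Moreover, $Y$ is a deformation retract of $M_f$, and under this retraction $f$ factors as the inclusion $i\colon X\hookrightarrow M_f$ followed by the retraction $r\colon M_f\to Y$. Since $r$ is a homotopy equivalence, $f$ is a homotopy equivalence precisely when $i$ is, and $i$ is a weak homotopy equivalence because $f$ is.

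\textbf{Vanishing of relative homotopy groups.} Because $\pi_0(i)$ is a bijection, we may work one path component at a time with a chosen basepoint $x_0\in X$. The long exact sequence of the pair,
\[
\cdots\to\pi_n(X)\xrightarrow{i_*}\pi_n(M_f)\to\pi_n(M_f,X)\to\pi_{n-1}(X)\xrightarrow{i_*}\pi_{n-1}(M_f)\to\cdots,
\]
together with the fact that every $i_*$ is an isomorphism, gives $\pi_n(M_f,X,x_0)=0$ for all $n\geq 1$. At the bottom end of the sequence, injectivity on $\pi_0$ handles $n=1$.

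\textbf{Compression lemma (the main step).} I will prove the following. Let $(W,A)$ be a CW pair and $(Z,B)$ a pair of spaces with $\pi_n(Z,B,b)=0$ for every $n$ such that $W\setminus A$ has $n$-cells and every $b\in B$. Then any map $g\colon (W,A)\to (Z,B)$ is homotopic rel $A$ to a map with image in $B$.

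The proof is by induction over skeleta. Suppose $g$ already maps $A\cup W_{n-1}$ into $B$, and let $e$ be an $n$-cell with characteristic map $\Phi\colon (D^n,S^{n-1})\to W$. Then $g\circ\Phi$ represents an element of $\pi_n(Z,B)$. Since this group is zero, $g\circ\Phi$ is homotopic rel $S^{n-1}$ to a map into $B$. Doing this for all $n$-cells defines a homotopy on $A\cup W_n$ rel $A\cup W_{n-1}$. The homotopy extension property of the CW pair $(W, A\cup W_n)$ extends it to all of $W$.

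Concatenating these homotopies over all $n$ requires care: the $n$-th homotopy is performed during the time interval $[1/2^{n+1},1/2^n]$. The resulting homotopy is continuous on each finite skeleton and hence, by the weak topology, on all of $W$. For the finite complexes relevant to this paper this subtlety disappears. I expect this step to be the main obstacle, in particular getting the bookkeeping with basepoints and the infinite concatenation right.

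\textbf{Conclusion.} I apply the compression lemma to the identity map $(M_f,X)\to(M_f,X)$. It is homotopic rel $X$ to a map $\rho\colon M_f\to X$, so $\rho$ is a retraction and $i\circ\rho\sim \mathsf{id}_{M_f}$; that is, $X$ is a deformation retract of $M_f$. Hence $i$ is a homotopy equivalence, and therefore so is $f\sim r\circ i$.
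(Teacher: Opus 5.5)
Your proposal is correct and is the standard textbook proof: the mapping-cylinder reduction plus the compression lemma, as in Hatcher's Lemma~4.6 and Theorem~4.5. The paper gives no proof of its own and simply cites Whitehead's theorem as classical, so this is the argument it implicitly relies on, and the one piece of bookkeeping you should add is the $0$-dimensional case of your compression lemma: $M_f\setminus X$ contains the $0$-cells of $Y$, so you must also check that every path component of $M_f$ meets $X$, which is exactly surjectivity of $\pi_0(i)$.
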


We will further make use of another invariant of topological spaces called homology and cohomology groups, denoted $H_n(X; \mathbb{Z})$ and $H^n(X; \mathbb{Z})$ for $n\geq 0$, respectively.
The (co)homology groups are Abelian groups that can be defined in several equivalent ways, e.g. via simplicial, singular or cellular homology. We refer to~\cite[Chapter~2]{Hatcher2002} for full details. One can further define the reduced homology and cohomology groups $\tilde{H}_n(X; \mathbb{Z})$ and $\tilde{H}^n(X; \mathbb{Z})$ for $n\geq 0$ that are the same as the usual homology and cohomology groups for $n>0$ and differ only in dimension $0$ where $\tilde{H}_0(X; \mathbb{Z}) = H_0(X; \mathbb{Z})/\mathbb{Z}$ and $\tilde{H}^0(X; \mathbb{Z}) = \ker(H^0(X; \mathbb{Z}) \to H^0(*; \mathbb{Z}))$.

For our purposes, it is enough to know that (reduced) homology and cohomology groups are homotopy invariants of topological spaces, and are "weaker" than weak homotopy equivalences i.e. if $\pi_n(X)\simeq \pi_n(Y)$, then $H_n(X; \mathbb{Z}) \cong H_n(Y; \mathbb{Z})$ and $H^n(X; \mathbb{Z}) \cong H^n(Y; \mathbb{Z})$ for all $n\geq 0$. Further, a continuous map $f\colon X \to Y$ induces homomorphisms on homology and cohomology groups, denoted $f_*\colon H_n(X; \mathbb{Z}) \to H_n(Y; \mathbb{Z})$ and $f^*\colon H^n(Y; \mathbb{Z}) \to H^n(X; \mathbb{Z})$, respectively.

We will make use of the following consequence of the Whitehead theorem and theorem of Hurewicz that is sometimes referred to as Homology Whitehead theorem that shows that even an equivalence on the weaker invariant called homology can induce a homotopy equivalence:
\begin{theorem}[\cite{Hatcher2002}, Theorem~4.5]\label{t:whitehead}
Let $f: X \to Y$ be a continuous map between two simply connected CW complexes. If the induced map on integral homology groups,
$$f_*: H_n(X; \mathbb{Z}) \xrightarrow{\cong} H_n(Y; \mathbb{Z})$$
is an isomorphism for all $n \ge 0$, then $f$ is a homotopy equivalence.
\end{theorem}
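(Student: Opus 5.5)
We reduce the statement to the Whitehead theorem stated just above. Since $f$ may be replaced by a homotopic cellular map (cellular approximation), we may assume $f$ is cellular. Its mapping cylinder $M_f$ is then a CW complex containing $X$ as a subcomplex, and $Y$ is a deformation retract of $M_f$. Under this retraction $f$ becomes the inclusion $i\colon X \hookrightarrow M_f$. It therefore suffices to show that $i$ is a weak homotopy equivalence, i.e.\ that $i_*\colon\pi_n(X)\to\pi_n(M_f)$ is an isomorphism for all $n$. The Whitehead theorem then yields that $i$, and hence $f$, is a homotopy equivalence.

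\textbf{Homology of the pair.} Consider the long exact homology sequence of the pair $(M_f,X)$:
\[
\cdots \to H_n(X)\xrightarrow{i_*} H_n(M_f) \to H_n(M_f,X) \to H_{n-1}(X)\xrightarrow{i_*} H_{n-1}(M_f)\to\cdots
\]
Since $i_*$ is identified with $f_*$, which is an isomorphism in every degree by hypothesis, exactness gives $H_n(M_f,X;\mathbb{Z})=0$ for all $n$.

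\textbf{Homotopy of the pair.} Consider the long exact homotopy sequence of the pair. Both $X$ and $M_f\simeq Y$ are path connected and simply connected, so $\pi_0$ and $\pi_1$ agree and $\pi_1(M_f,X)=0$; that is, the pair is $1$-connected.

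Suppose, for contradiction, that some relative homotopy group is nonzero, and let $n\ge 2$ be the least index with $\pi_n(M_f,X)\neq 0$. The pair is then $(n-1)$-connected. Because $X$ is simply connected, the relative Hurewicz theorem applies and gives
\[
\pi_n(M_f,X)\cong H_n(M_f,X)=0,
\]
a contradiction. Hence $\pi_n(M_f,X)=0$ for all $n$. Returning to the long exact homotopy sequence, this forces $i_*\colon\pi_n(X)\to\pi_n(M_f)$ to be an isomorphism for all $n$.

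The main obstacle is the relative Hurewicz step. The absolute Hurewicz theorem is not enough here, and the relative version needs the subspace to be simply connected (in general one only gets an isomorphism after quotienting by the $\pi_1$-action). This is exactly where the simple connectivity hypothesis on $X$ is used; the hypothesis on $Y$ is used to obtain $\pi_1(M_f,X)=0$. The remaining steps are formal consequences of exactness and of the cellular approximation and mapping cylinder constructions.
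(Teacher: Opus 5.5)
Your proof is correct and follows the route the paper points to. The paper gives no proof of its own: it cites Hatcher and describes the result as a consequence of the Whitehead and Hurewicz theorems. Your argument is exactly that standard one (Hatcher's Corollary~4.33): pass to the mapping cylinder, get $H_*(M_f,X)=0$ from the homology long exact sequence, use the relative Hurewicz theorem with $X$ simply connected to kill every $\pi_n(M_f,X)$, and conclude with Whitehead's theorem.
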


The following, continuous version of the Alexander duality gives us a correspondence between the (reduced) homology and cohomology of a space $X$ and its dual. 
\begin{theorem}[Alexander duality,\cite{Hatcher2002}, Corollary~3.45]\label{thm:duality}
Let $X\subseteq S^n$ be a finite CW complex that is compact, locally contractible, nonempty and proper subspace of $S^n$, then 
\[
\tilde{H}_i(S^n \setminus X;\mathbb{Z}) \cong \tilde{H}^{n-i-1}(X;\mathbb{Z}).
\]
\end{theorem}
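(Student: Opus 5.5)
The plan is to deduce this from the duality isomorphism for compact subsets of a closed orientable manifold, following the route in~\cite{Hatcher2002}. Since the statement is quoted from~\cite{Hatcher2002}, in the paper we could simply cite it; the sketch below records how the argument goes.

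\textbf{Step 1 (duality for the pair).} Take $M = S^n$, which is a closed $\mathbb{Z}$-orientable $n$-manifold, and $K = X$, which is compact. We use the duality
\[
H_i(M, M\setminus K;\mathbb{Z}) \cong \check{H}^{\,n-i}(K;\mathbb{Z}),
\]
given by cap product with the fundamental class restricted near $K$. Here $\check H$ is \v{C}ech cohomology, i.e. the direct limit of $H^{n-i}(U)$ over open neighbourhoods $U \supseteq K$. This is proved by the standard bootstrapping argument:
\begin{itemize}
\item verify the isomorphism for compact convex sets in a chart;
\item extend to finite unions of such sets by Mayer--Vietoris, using the five lemma;
\item pass to arbitrary compact sets in a chart, and then in $M$, by taking direct limits.
\end{itemize}

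\textbf{Step 2 (\v{C}ech versus singular cohomology).} We replace $\check H^{*}(X)$ by the singular group $H^{*}(X)$. This is where local contractibility enters. A compact, locally contractible subspace of $S^n$ is a Euclidean neighbourhood retract. Hence $X$ is a retract of arbitrarily small neighbourhoods $U$ that deformation retract into smaller ones, and the direct system $H^{*}(U)$ stabilises at $H^{*}(X)$. In our setting $X$ is a finite CW complex (indeed a polyhedron), so this step is even simpler: $X$ has a regular neighbourhood deformation retracting onto it.

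\textbf{Step 3 (passing to the complement).} Apply the reduced long exact sequence of the pair $(S^n, S^n\setminus X)$. The complement $S^n\setminus X$ is nonempty because $X$ is proper. Since $\tilde H_*(S^n)$ is concentrated in degree $n$, the connecting maps give
\[
\tilde H_i(S^n\setminus X)\cong H_{i+1}(S^n, S^n\setminus X)\quad\text{for } i\le n-2 .
\]
Combining with Steps 1--2, this equals $H^{n-i-1}(X)$. The edge degrees $i = n-1, n$ need separate checks, because of the map $H_n(S^n)\to H_n(S^n,S^n\setminus X)\cong H^0(X)$ and the passage to reduced cohomology. For $X$ nonempty this is the inclusion $\mathbb{Z}\to H^0(X)$, and its cokernel is exactly $\tilde H^0(X)$, which gives the claimed formula in degree $n-1$.

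\textbf{Main obstacle.} The substantive part is Step 1, the duality theorem itself, specifically the limit argument and the verification that the Mayer--Vietoris diagrams commute up to sign. For the present paper we would invoke it directly from~\cite{Hatcher2002}, exactly as the statement does, rather than reprove it.
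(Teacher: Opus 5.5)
Your sketch is correct and matches what the paper does. The paper gives no proof of its own and simply cites Hatcher, and Hatcher's argument is your route: cap-product duality $H_i(S^n,S^n\setminus X)\cong\varinjlim_{U\supseteq X}H^{n-i}(U)$ by the convex-set/Mayer--Vietoris/limit bootstrap, then local contractibility $\Rightarrow$ neighbourhood retract to identify the limit with $H^{n-i}(X)$, then the reduced long exact sequence of $(S^n,S^n\setminus X)$ with the correction in degree $n-1$.

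The one slip is your aside that $X$ is ``indeed a polyhedron'' with a regular neighbourhood. A finite CW complex can be wildly embedded in $S^n$ (e.g.\ the Fox--Artin arc), so that shortcut is valid only in the paper's application, where $\gr{K}$ is a subcomplex of the triangulated sphere $\gr{\partial\Delta^{d-1}}$. For the theorem as stated, keep the neighbourhood-retract argument, made precise as follows: use a retraction $r\colon U\to X$ and the straight-line homotopy in $S^n\setminus\{p\}\cong\mathbb{R}^n$, with $p\notin X$. This makes each restriction $H^*(V)\to H^*(W)$ factor through $H^*(X)$ for a suitable smaller neighbourhood $W\subseteq V$.
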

As a corollary, for geometric realizations of simplicial complexes we obtain 
\begin{corollary}\label{cor:AlexanderDual}
    Let $K$ be a simplicial complex on the vertex set $V$ such that $|V| = d$. Then  $\tilde{H}_i(\gr{K^\AD};\mathbb{Z}) \cong \tilde{H}^{d-i-3}(\gr{K};\mathbb{Z})$.
\end{corollary}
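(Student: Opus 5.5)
We derive Corollary~\ref{cor:AlexanderDual} from Theorem~\ref{thm:duality} by realizing $\gr{K}$ inside a sphere whose complement deformation retracts onto $\gr{K^\AD}$. Let $\Delta = 2^V$ be the full simplex on $V$, so $\partial\Delta$ realizes as a sphere $S^{d-2}$. We dispose of degenerate cases first. If $K = 2^V$, then $K^\AD$ is empty and the identity degenerates, so we adopt the usual conventions for the empty complex; we do the same if $K^\AD$ lacks some vertex. Otherwise $K \subsetneq \partial\Delta$ after discarding $V$ itself, since $V\notin K$ whenever $K\neq 2^V$. Hence $\gr{K}$ is a compact, nonempty, proper subcomplex of $S^{d-2}$, and Theorem~\ref{thm:duality} with $n=d-2$ gives
\[
\tilde{H}_i(S^{d-2}\setminus \gr{K};\mathbb{Z}) \cong \tilde{H}^{d-3-i}(\gr{K};\mathbb{Z}).
\]
Note that $d-2-i-1 = d-i-3$, which matches the exponent in the statement.

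The remaining and main step is to show $S^{d-2}\setminus\gr{K} \simeq \gr{K^\AD}$. For this we pass to the barycentric subdivision $\mathrm{sd}(\partial\Delta)$. Its vertices are the nonempty proper subsets $A\subsetneq V$, and its simplices are chains of such subsets. A standard fact is that the complement of a full subcomplex deformation retracts onto the complementary full subcomplex. Here $\mathrm{sd}(K)$ is the full subcomplex on the vertices $A\in K$. The complement $S^{d-2}\setminus\gr{K}$ therefore deformation retracts onto the full subcomplex $P$ on the vertices $\{A \subsetneq V : A\notin K\}$. The retraction is the straight-line homotopy: in each simplex of the subdivision, push points away from the face spanned by $K$-vertices toward the opposite face.

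Next we identify $P$ with $K^\AD$ up to homotopy. The poset $\{A\notin K\}$ is mapped by complementation $A\mapsto V\setminus A$, which is order reversing, onto the poset of nonempty faces $B$ of $K^\AD$ with $B\neq V$. Here we use that $V\setminus A \in K^\AD$ exactly when $A\notin K$. Order reversal does not change the order complex, so $P$ is isomorphic to the order complex of the face poset of $K^\AD$ minus possibly the top element $V$. Since $V\notin K^\AD$ because $\emptyset\in K$, this is exactly $\mathrm{sd}(K^\AD)$, and hence $\gr{P}\cong\gr{K^\AD}$.

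Combining the deformation retraction with this isomorphism yields the claim. The main obstacle is checking that the deformation retraction is well defined, which comes down to verifying fullness of $\mathrm{sd}(K)$ inside $\mathrm{sd}(\partial\Delta)$, together with the bookkeeping of the degenerate cases.
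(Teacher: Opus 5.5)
Your proposal is correct and follows the route the paper has in mind: the paper states this as an immediate corollary of Theorem~\ref{thm:duality} with $n=d-2$, and you supply the standard missing details, namely realizing $\gr{K}$ inside $\gr{\partial\Delta}\cong S^{d-2}$ and identifying the complement with $\gr{K^\AD}$ via the barycentric subdivision and the order-reversing bijection $A\mapsto V\setminus A$. One bookkeeping point: properness of $\gr{K}$ in $S^{d-2}$ comes from excluding $K=\partial\Delta$ (i.e.\ $K^\AD=\{\emptyset\}$, where one needs the convention $\tilde{H}_{-1}(\{\emptyset\})=\mathbb{Z}$), not from $V\notin K$, and the case where $K^\AD$ merely misses some vertices needs no convention, since your argument applies verbatim there.
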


To simplify even further, we will be using Corollary~\ref{cor:AlexanderDual} only in the special case when $\gr{K} $ has the homotopy type or a wedge sum of spheres $\gr{K} \simeq \bigvee_{i = 1}^{k} S^n$. We remark that in that case 
\[
\tilde{H}_j(\gr{K})  \cong \tilde{H}^{j}(\gr{K};Z) \cong 
\begin{cases}
    \bigoplus_{i=1}^{k} \mathbb{Z} & \text{if } j = n, \\
    0 & j\neq n.
\end{cases}
\]

The following notion of join of topological spaces roughly corresponds to the join of simplicial complexes in the following sense: Given simplicial complexes $K,L$, $\gr{K\star L} \simeq \gr{K}\star \gr{L}$ (see discussion below \cite[Proposition~4.2.4]{matousek2003}).
\begin{definition}
Let $X,Y$ be finite CW complexes. The join of $X$ and $Y$ denoted $X\star Y$ is defined by
\[
X \star Y = X \times Y \times [0,1]/\sim,
\] where $(x,y,1) \sim (x',y,1)$ and $(x,y,0) \sim (x,y',0)$ for all $x,x' \in X $ and $y,y' \in Y$.

\end{definition}
Further a join $X \star *$ with a point $*$ is called a \emph{cone} and denoted $CX$ and that join of $X$ with two disjoint points (or gluing of two cones $CX$ along their bases) is the suspension $\Sigma X$ (see \cite[Chapter~0]{Hatcher2002} for details).

It is classical that the join of spheres $S^k \star S^\ell$ is a sphere $S^{k + \ell + 1}$, assuming $k,l\geq 0$ (see e.g. \cite[Example~4.2.2]{matousek2003}). We formulate a slight generalization of this result and omit the proof, just noting that it can be easily deduced by e.g. using the \emph{reduced join} construction.

\begin{lemma}\label{l:joinSpheres}
Let $K, L$ be simplicial complexes, $m,n \in \mathbb{N}$ and $0\leq k,l \in \mathbb{Z}$ such that $\gr{K} \simeq \bigvee_{j=1}^n S^k$ and $\gr{L} \simeq \bigvee_{i=1}^m S^\ell$. Then
\[
\gr{K} \star \gr{L} \simeq \bigvee_{i=1}^{mn} S^{k + \ell + 1}.
\]
\end{lemma}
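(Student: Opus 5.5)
The plan is to reduce to the classical case of single spheres, $S^k\star S^\ell\cong S^{k+\ell+1}$, using that the join is homotopy invariant and distributes over wedge sums up to homotopy.

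\textbf{Step 1: homotopy invariance.} If $X\simeq X'$ and $Y\simeq Y'$ are CW complexes, then $X\star Y\simeq X'\star Y'$. One way to see this is to note that $X\star Y$ is the homotopy pushout of the projections $X\leftarrow X\times Y\to Y$. Homotopy pushouts of CW complexes are invariant under homotopy equivalences of the diagrams, and $X\times Y\simeq X'\times Y'$. We may therefore replace $\gr{K}$ and $\gr{L}$ by $A=\bigvee_{j=1}^n S^k$ and $B=\bigvee_{i=1}^m S^\ell$.

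\textbf{Step 2: reduce to the suspension of the smash product.} For pointed CW complexes there is the standard equivalence $X\star Y\simeq \Sigma(X\wedge Y)$. This is where the reduced join enters: the reduced join is obtained from the unreduced one by collapsing the contractible subcomplex $\{x_0\}\star\{y_0\}$, which is an arc, so the quotient map is a homotopy equivalence. One then identifies the reduced join with the reduced suspension of $X\wedge Y$, and for CW complexes this is homotopy equivalent to the unreduced suspension. I would cite Hatcher, Chapter~0, for this rather than reprove it.

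\textbf{Step 3: compute the smash product.} The smash product distributes over wedges:
\[
\Bigl(\bigvee_j S^k\Bigr)\wedge\Bigl(\bigvee_i S^\ell\Bigr)\cong\bigvee_{i,j}S^k\wedge S^\ell,
\]
and $S^k\wedge S^\ell\cong S^{k+\ell}$. Suspension commutes with wedges, so
\[
\Sigma(A\wedge B)\simeq\bigvee_{mn}\Sigma S^{k+\ell}=\bigvee_{mn}S^{k+\ell+1}.
\]

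\textbf{Main obstacle: the case $k=0$ or $\ell=0$.} Here $\bigvee_n S^0$ is not path connected; it is the discrete space with $n+1$ points, and the formula changes. For example, $S^0\vee S^0$ consists of three points, and its join with $S^0$ is a graph with $3\cdot 2$ edges and $5$ vertices, i.e.\ $\bigvee_2 S^1$, consistent with $mn=2$. The smash and wedge manipulations above remain valid for pointed spaces with nondegenerate basepoints even in the disconnected case, since $S^0\wedge Y=Y$ and distributivity is a pointwise identity. So the computation should go through, but it must be checked carefully here.

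As a sanity check independent of Step 2, I would verify via homology. The reduced homology of the join satisfies $\tilde H_{r+1}(X\star Y)\cong\bigoplus_{p+q=r}\tilde H_p(X)\otimes\tilde H_q(Y)$ plus Tor terms, and the Tor terms vanish here because the groups are free. For $k+\ell\ge 1$ the join is simply connected: a join of a nonempty space with a path-connected space is simply connected, as is a join of two spaces that are not both $0$-dimensional. Theorem~\ref{t:whitehead} then applies, using a map from the wedge of spheres that realizes the homology generators; such a map exists by Hurewicz.
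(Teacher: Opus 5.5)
Your proposal is correct and is essentially the argument the paper has in mind: the paper omits the proof and only points to the reduced join, i.e.\ to $X\star Y\simeq\Sigma(X\wedge Y)$ combined with distributivity of $\wedge$ over finite wedges and $S^k\wedge S^\ell\cong S^{k+\ell}$, which is exactly your Steps~1--3 (including the case $k=0$ or $\ell=0$). One minor precision: a homeomorphism with the reduced suspension of $X\wedge Y$ comes from collapsing the contractible subcomplex $X\star\{y_0\}\cup\{x_0\}\star Y$, not just the arc $\{x_0\}\star\{y_0\}$; since you invoke the standard equivalence $X\star Y\simeq\Sigma(X\wedge Y)$ anyway, this does not affect the proof.
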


The following result is essentially described in \cite[Example~0.14 and Example 0.13]{Hatcher2002}.
\begin{lemma}\label{l:contraction}\label{l:contractionSuspension}
Let $A\subseteq K$, $L$ be simplicial complexes.
\begin{itemize}
    \item If $A\subseteq K$ is contractible, then $\gr{K}/\gr{A}\simeq \gr{K}$.
    \item If $A\subseteq K$ is contractible in $K$, then $\gr{K \cup CA}\simeq \gr{K} \vee \gr{SA}$.
    \item if $A\subseteq L $ and $A$ is contractible. Then $\gr{K \cup L}\simeq \gr{K} \vee \gr{L}$.
\end{itemize}
\end{lemma}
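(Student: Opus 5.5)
We would prove the three items in order, each time reducing to two standard facts about CW pairs from \cite[Chapter~0]{Hatcher2002}. \textbf{(a)} If $(X,A)$ is a CW pair with $A$ contractible, then the quotient map $X\to X/A$ is a homotopy equivalence \cite[Proposition~0.17]{Hatcher2002}. \textbf{(b)} If $f\simeq g\colon A\to X$, then the mapping cones $X\cup_f CA$ and $X\cup_g CA$ are homotopy equivalent \cite[Proposition~0.18]{Hatcher2002}. Both facts rest on the homotopy extension property. That property holds here because a subcomplex of a simplicial complex gives a CW pair $(\gr{K},\gr{A})$ on geometric realizations.

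For the first item, $\gr{A}\subseteq\gr{K}$ is a contractible subcomplex, so (a) applies directly and gives $\gr{K}\simeq\gr{K}/\gr{A}$. For the second item, we view $\gr{K\cup CA}$ as the mapping cone of the inclusion $i\colon\gr{A}\to\gr{K}$. By hypothesis $i$ is homotopic to a constant map $c$ with value some vertex $x_0$, so (b) gives $\gr{K\cup CA}\simeq \gr{K}\cup_c C\gr{A}$. In the mapping cone of a constant map, the base $\gr{A}\times\{0\}$ of the cone is collapsed to the point $x_0$. Hence $\gr{K}\cup_c C\gr{A}$ is $\gr{K}$ with the reduced-up-to-homotopy space $C\gr{A}/(\gr{A}\times\{0\})$ attached at $x_0$. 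The space $C\gr{A}/(\gr{A}\times\{0\})$ is homotopy equivalent to $\Sigma\gr{A}$: it is $\Sigma\gr{A}$ with one cone collapsed, and that cone is contractible, so (a) applies. This yields $\gr{K}\vee\gr{\Sigma A}$, noting that the wedge point can be moved freely since the spaces are path connected.

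For the third item, we read the hypothesis as $A=K\cap L$ with $A$ contractible; this is how the item is applied to edge attachment, where the statement says $K\cap L$ is contractible. Then $\gr{A}$ is a contractible subcomplex of $\gr{K\cup L}$, so the first item gives $\gr{K\cup L}\simeq\gr{K\cup L}/\gr{A}$. The latter quotient equals $\gr{K}/\gr{A}\vee\gr{L}/\gr{A}$, wedged at the point to which $\gr{A}$ is collapsed. Applying the first item again to each summand gives $\gr{K}\vee\gr{L}$.

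The main obstacle we expect is bookkeeping rather than substance. First, we must make sure that wedge points are well defined up to homotopy; this needs path connectedness, or else the argument must be done with care at the collapsed point. Second, in the second item we must identify the mapping cone of a constant map with a wedge with the suspension. We would handle the second point by the explicit collapse argument above, rather than relying on the unreduced/reduced suspension comparison in general.
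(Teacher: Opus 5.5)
Your proposal is correct and follows essentially the paper's route: the paper gives no argument beyond a pointer to the standard examples in Hatcher's Chapter~0, and you reconstruct exactly that reasoning from Propositions~0.17 and~0.18. You also use the right reading $A=K\cap L$ of the third item, which is necessary since as literally stated it fails, e.g.\ for $K=L=\partial\Delta^2$ and $A$ a vertex. As minor tidying, $C\gr{A}/\gr{A}$ is already homeomorphic to the unreduced suspension, and $\gr{K}/\gr{A}\vee\gr{L}/\gr{A}$ is also the quotient of $\gr{K}\vee\gr{L}$ (wedged at a vertex of $A$) by the contractible subcomplex $\gr{A}\vee\gr{A}$, so one more application of the first item returns you to $\gr{K}\vee\gr{L}$ without any based-homotopy bookkeeping.
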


The following theorem is a classical result \cite[Corollary 3.1]{bjornerWW2005} that allows us to deduce homotopy equivalence of two complexes via a simplicial map between them.
\begin{theorem}\label{thm:quillenfiber}
    Let $K, L$ be simplicial complexes and let $p\colon K \to L$ be a simplicial map. If  $p^{-1}(\sigma) = \{ \tau\mid p(\tau) \subseteq \sigma\} $ is contractible for all $\sigma\in L$, then $\gr{K} \simeq \gr{L}$.
\end{theorem}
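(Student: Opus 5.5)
The plan is to reduce the statement to Quillen's fiber lemma for posets, passing through face posets and barycentric subdivisions.

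\textbf{Setup.} Let $\mathcal F(K)$ and $\mathcal F(L)$ be the posets of nonempty simplices ordered by inclusion. Their order complexes are the barycentric subdivisions, so $\gr{\Delta(\mathcal F(K))}\cong\gr{K}$ and likewise for $L$. The simplicial map $p$ induces a poset map
\[
f\colon\mathcal F(K)\to\mathcal F(L),\qquad \tau\mapsto p(\tau).
\]
It is order preserving because $\tau'\subseteq\tau$ implies $p(\tau')\subseteq p(\tau)$. For $\sigma\in L$, the fiber $f^{-1}(\mathcal F(L)_{\le\sigma})=\{\tau : p(\tau)\subseteq\sigma\}$ is downward closed, so it is a subcomplex of $K$. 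This subcomplex is exactly $p^{-1}(\sigma)$, and its order complex is the barycentric subdivision of $p^{-1}(\sigma)$. By hypothesis it is therefore contractible.

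\textbf{The cylinder poset.} Form $M=\mathcal F(K)\sqcup\mathcal F(L)$. Keep the original orders on each piece and add the relation $\tau<\sigma$ for $\tau\in\mathcal F(K)$, $\sigma\in\mathcal F(L)$ whenever $f(\tau)\le\sigma$; transitivity follows from monotonicity of $f$. Define $r\colon M\to\mathcal F(L)\subseteq M$ by $r(\tau)=f(\tau)$ and $r(\sigma)=\sigma$. Then $r$ is order preserving and satisfies $r(x)\ge x$ for all $x$. The standard order-homotopy lemma says comparable poset maps are homotopic, so $r$ is homotopic to the identity and $\gr{\Delta(M)}$ deformation retracts onto $\gr{\Delta(\mathcal F(L))}\cong\gr{L}$.

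\textbf{Removing $L$.} It remains to show that $\mathcal F(K)\hookrightarrow M$ is a homotopy equivalence. Remove the elements of $\mathcal F(L)$ from $M$ one at a time, each time choosing an element $\sigma$ that is minimal among the remaining elements of $\mathcal F(L)$. At that moment, the elements of the current poset below $\sigma$ are exactly $f^{-1}(\mathcal F(L)_{\le\sigma})$, because all elements of $L$ below $\sigma$ have already been removed. The elements above $\sigma$ are $\mathcal F(L)_{>\sigma}$, since no element of $K$ lies above an element of $L$. Hence the link of the vertex $\sigma$ in the current order complex is the join $\Delta(f^{-1}(\mathcal F(L)_{\le\sigma}))\star\Delta(\mathcal F(L)_{>\sigma})$. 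The first factor is contractible, so the link is contractible (when the second factor is empty, the link is just the contractible first factor). Deleting a vertex whose link is contractible does not change the homotopy type, because the closed star is a cone glued along the contractible link. After all elements of $L$ are removed, we obtain $\gr{K}\simeq\gr{\Delta(M)}\simeq\gr{L}$.

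The main obstacle is bookkeeping: verifying that at each removal step the lower part of the link is exactly the fiber $p^{-1}(\sigma)$, which is what forces the removal order (minimal elements of $L$ first). Once that is checked, the rest is formal.
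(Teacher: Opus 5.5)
Your argument is correct. It differs from the paper mainly because the paper gives no proof of this statement: it quotes it as a classical result (Corollary~3.1 of Bj\"{o}rner--Wachs--Welker), which ultimately rests on Quillen's fiber lemma. What you have written is the standard mapping-cylinder proof of Quillen's fiber lemma, applied to the face-poset map $f(\tau)=p(\tau)$.

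The essential checks all go through:
\begin{itemize}
\item The cylinder relation on $M$ is a partial order.
\item $r$ is a monotone retraction onto $\mathcal F(L)$ with $r\ge \mathrm{id}_M$, so $\Delta(M)\simeq\Delta(\mathcal F(L))$.
\item With the minimal-first removal order, the surviving part of $\mathcal F(L)$ is always an up-set. Hence at the moment $\sigma$ is deleted you have exactly $P_{<\sigma}=\mathcal F(p^{-1}(\sigma))$. This is the only point that really matters: once the lower factor of the link is contractible, the upper factor can be anything.
\end{itemize}

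The trade-off is the usual one. The citation costs one line, and the Bj\"{o}rner--Wachs--Welker fiber theorems cover more general situations; for instance, they allow fibers that are only suitably connected and then give wedge decompositions. Your version is self-contained and uses only two elementary facts: comparable poset maps are homotopic, and deleting a vertex with contractible link preserves homotopy type. It also gives slightly more than the statement asks. The composite $\Delta(\mathcal F(K))\hookrightarrow\Delta(M)\xrightarrow{r}\Delta(\mathcal F(L))$ is $\Delta(f)=\mathrm{sd}(p)$, which is homotopic to $\gr{p}$ under the subdivision homeomorphisms. So the equivalence is realized by $p$ itself, which matches how the paper later phrases its applications (``$p$ induces a homotopy equivalence'').
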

The following lemma states that the embedded join of two complexes is homotopy equivalent to their join under the condition that their intersection is a simplex:
\begin{lemma}\label{l:joinsimplex}
Let $K,L$ be simplicial complexes and let their intersection $K \cap L$ be the standard simplex $\Delta^n$ for some $n\geq 0$. Then    
\begin{align*}
\gr{K \oplus L} \simeq \gr{K \star L}.
\end{align*}
\end{lemma}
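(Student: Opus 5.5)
The plan is to compare $K\star L$ and $K\oplus L$ through the obvious "forgetful" simplicial map and then apply the fiber criterion of Theorem~\ref{thm:quillenfiber}. Write $S=V(K)\cap V(L)$; the hypothesis $K\cap L\cong\Delta^n$ says that $S$ is a simplex of both $K$ and $L$. Define
\[
p\colon K\star L\to K\oplus L,\qquad (v,0)\mapsto v,\quad (w,1)\mapsto w .
\]
This map is simplicial, since a simplex $\alpha\times\{0\}\cup\beta\times\{1\}$ is sent to $\alpha\cup\beta\in K\oplus L$. It is surjective on simplices, and it identifies the two copies $(s,0)$ and $(s,1)$ of each vertex $s\in S$.

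Next I compute the fibers. Let $\sigma=\alpha\cup\beta\in K\oplus L$. A simplex $\alpha'\times\{0\}\cup\beta'\times\{1\}$ lies in $p^{-1}(\sigma)$ exactly when $\alpha'\subseteq\sigma$ and $\beta'\subseteq\sigma$. Hence
\[
p^{-1}(\sigma)=K|_{\sigma}\star L|_{\sigma},
\]
where $K|_\sigma=\{\tau\in K\mid \tau\subseteq\sigma\}$ is the induced subcomplex, and similarly for $L$. A join is contractible as soon as one factor is. So it suffices to show that $K|_\sigma$ (or, symmetrically, $L|_\sigma$) is a full simplex, i.e. that $\sigma\cap V(K)\in K$.

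The key computation is
\[
\sigma\cap V(K)=\alpha\cup(\beta\cap V(K))=\alpha\cup(\beta\cap S).
\]
So one must check that $\alpha\cup(\beta\cap S)\in K$ for $\alpha\in K$ and $\beta\cap S\subseteq S$. This is where the hypothesis on the intersection has to be used. I will exploit that $S$ is a simplex lying in both complexes, arguing that the simplex $S$ is "shared" and can be absorbed into either factor. Concretely, I will try to split the fibers into two cases. If $\beta\cap S\subseteq\alpha$, the set equals $\alpha\in K$. Otherwise, I enlarge the factorisation $\sigma=\alpha\cup\beta$ by moving $\beta\cap S$ into the $K$-part, choosing among all representations $\sigma=\alpha\cup\beta$ one with $\alpha$ maximal. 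Once either $K|_\sigma$ or $L|_\sigma$ is a simplex, $p^{-1}(\sigma)$ is a cone, and Theorem~\ref{thm:quillenfiber} yields $\gr{K\star L}\simeq\gr{K\oplus L}$.

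\textbf{Main obstacle.} The step I expect to be hardest is the fiber step: guaranteeing that $\alpha\cup(\beta\cap S)$ is a simplex of $K$, or that the analogous set is a simplex of $L$. Knowing only that $S$ itself is a simplex does not obviously imply that $S$ can be adjoined to an arbitrary simplex of $K$. If the maximal-representation trick does not settle it, my fallback is a different route. Since $K\cap L$ is contractible, I would collapse it: form the quotients of both $\gr{K\star L}$ and $\gr{K\oplus L}$ by the contractible subcomplexes spanned over $S$, and use the first item of Lemma~\ref{l:contraction}. I would then compare the two quotients simplex by simplex, away from $S$, where $p$ is a bijection.
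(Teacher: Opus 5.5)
You set things up exactly as the paper does: the same map $p\colon K\star L\to K\oplus L$, the same appeal to Theorem~\ref{thm:quillenfiber}, and your fibre computation $p^{-1}(\sigma)=K|_\sigma\star L|_\sigma$ is correct. The paper's proof simply asserts that each fibre is a simplex, which is precisely the step you could not verify. The step you single out as the main obstacle is a genuine gap that no argument can close, because the statement is false for $n\geq 1$.

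First, the fibre depends only on $\sigma$, not on the chosen decomposition $\sigma=\alpha\cup\beta$, so choosing a representation with $\alpha$ maximal changes nothing. Second, take $K=\partial\Delta^2$ on $\{a,s,t\}$ and $L=\partial\Delta^2$ on $\{b,s,t\}$. Then:
\begin{itemize}
\item $K\cap L$ is the edge $\{s,t\}\cong\Delta^1$, and $\gr{K\star L}\simeq S^1\star S^1\simeq S^3$;
\item but $\{a,b,s,t\}=\{a,s\}\cup\{b,t\}\in K\oplus L$, so $K\oplus L$ is the full $3$-simplex and is contractible;
\item the fibre over $\{a,b,s,t\}$ is all of $K\star L$, which is not contractible.
\end{itemize}
Your quotient fallback fails on the same example: collapsing the simplices spanned over $S$ still leaves spaces $\simeq S^3$ and $\simeq *$. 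The cells that survive in the quotients include every simplex meeting $S$ without lying in it, and on those $p$ is not a bijection. For instance, $\{a,b,s,t\}$ is the image of both $\{a,s\}\times\{0\}\cup\{b,t\}\times\{1\}$ and $\{a,t\}\times\{0\}\cup\{b,s\}\times\{1\}$.

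What your computation does prove is the following. If $\beta\cap S\subseteq\alpha$ then $\sigma\cap V(K)=\alpha$, and if $\alpha\cap S\subseteq\beta$ then $\sigma\cap V(L)=\beta$; in either case the fibre is a cone. So a fibre can fail to be a cone only if there are $w\in(\beta\cap S)\setminus\alpha$ and $w'\in(\alpha\cap S)\setminus\beta$, which forces $w\neq w'$ and hence $|S|\geq 2$. The lemma therefore holds for $n=0$, while for $n\geq 1$ one needs an extra hypothesis making the fibres $K|_\sigma\star L|_\sigma$ contractible.

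In the paper's edge-gluing step, $S=\{u,y\}$ and the second factor is $\Cliq{2}{G_n}$. The only problematic $\sigma$ have $u\in\beta\setminus\alpha$ and $y\in\alpha\setminus\beta$, or vice versa. This forces $\beta\in\{\{u\},\{u,x\}\}$ (resp.\ $\beta\in\{\{y\},\{y,v\}\}$). The second factor of the fibre is then the induced subcomplex of $\Cliq{2}{G_n}$ on $\{u,y\}$, $\{x,u,y\}$ or $\{u,y,v\}$, which is an edge or a path, so the fibre is still contractible. That application can be rescued by this direct fibre check, but the lemma as stated cannot be proved and must be weakened.
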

\begin{proof}
The proof follows from Theorem~\ref{thm:quillenfiber} by considering the simplicial map $p\colon K \star L 
\to K \oplus L$ defined as $p(\sigma \times \{0\} \cup \tau \times \{1\}) = \sigma \cup \tau$. One can easily verify that for each simplex $\gamma \in K \oplus L$, the preimage $p^{-1}(\gamma)$ is isomorphic to a simplex, thus contractible. The claim follows.
\end{proof}

\section{Proofs of main results}\label{s:Proofs}
We begin with the following easy Lemma, that directly implies Theorem~\ref{thm:main} for $k = 2$. Informally, the number of $1$-dimensional "holes" in the clique complex is the number of squares in the square sequence:
\begin{lemma}\label{l:clique}
Let $G$ be a square sequence graph of length $n$. Then $
\Cliq{2}{G} \simeq \bigvee_{n} S^{1}$.
\end{lemma}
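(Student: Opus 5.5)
Since a square sequence graph $G$ is bipartite, it contains no triangles. Hence $\Cliq{2}{G}$, whose simplices are the cliques of $G$, is exactly the graph $G$ viewed as a $1$-dimensional simplicial complex: its vertices are those of $G$ and its edges are those of $G$. A connected graph is homotopy equivalent to a wedge of $|E|-|V|+1$ circles. This can be seen by contracting a spanning tree $T$: $T$ is a contractible subcomplex, so the first part of Lemma~\ref{l:contraction} gives $\gr{G}\simeq\gr{G}/\gr{T}$. The quotient is a single vertex with one loop for each edge outside $T$. So the plan is to show that $G$ is connected and that $|E(G)|-|V(G)|+1=n$, by induction along the square sequence.

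For the base case $H_1=G_1\cong C_4$, the graph is connected and has $4$ edges and $4$ vertices, so the count is $1$. For the inductive step, $H_{i+1}=H_i\cup G_{i+1}$ with $H_i\cap G_{i+1}$ a path of length $1$ or $2$. This intersection is nonempty and $G_{i+1}$ is connected, so $H_{i+1}$ is connected.

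For edge gluing, the intersection has $2$ vertices and $1$ edge. Attaching the square adds the $2$ new vertices $x,v$ and the $3$ new edges $\{x,u\},\{x,v\},\{y,v\}$. For corner gluing, the intersection has $3$ vertices and $2$ edges, so we add the single vertex $x$ and the $2$ edges $\{x,u\},\{x,v\}$. In both cases $|E|-|V|$ increases by exactly $1$, so $|E(H_{i+1})|-|V(H_{i+1})|+1=i+1$.

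One point needs care. Condition 3 of Definition~\ref{d:sqseq} describes only the intersection subgraph $H_i\cap G_{i+1}$, so we must check that no new edge of $G_{i+1}$ already lies in $H_i$. This holds because every new edge has an endpoint ($x$, or $v$ in the edge case) outside $V(H_i)$. A further subtlety is that an edge of $H_i$ could join two shared vertices without being counted in the intersection. With the vertex labelling fixed after Definition~\ref{d:sqseq}, however, the shared vertices are exactly the endpoints of the intersection edges, so the edge counts above are correct. This bookkeeping is the only real obstacle; the rest is standard. Combining the pieces gives $\Cliq{2}{G}\simeq\bigvee_n S^1$.
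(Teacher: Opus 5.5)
Your proof is correct and follows the paper's argument: triangle-freeness makes $\Cliq{2}{G}$ the graph itself, and collapsing a spanning tree leaves $|E|-|V|+1$ circles. Your inductive count (each square raises $|E|-|V|$ by one, because the glued path has one more vertex than edges) is exactly the spanning-tree claim the paper leaves to the reader. One minor remark: the edge count is exact by inclusion--exclusion in any case, so a possible extra edge $\{u,v\}$ of $H_i$ in a corner gluing does not affect the count; it would only break triangle-freeness, and the one-line inductive proof of bipartiteness rules it out, since $u$ and $v$ share the neighbour $y$ in $H_i$ and so lie in the same colour class.
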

\begin{proof}
 Since $G$ is a square sequence graph, it is triangle-free, and thus $\Cliq{2}{G} = G$ if it is interpreted as a $1$-dimensional simplicial complex. We claim that every $H_i$, $n\geq i\geq 1$ in the square sequence of $G$ contains a spanning tree $T_i$ such that $|E(H_i)\setminus E(T_i)| = i$. We leave the proof of this fact to the reader.

Then $T=T_n$ is a contractible subcomplex of $\Cliq{2}{G} = G = H_n$. Lemma~\ref{l:contraction} gives
\[
\gr{\Cliq{2}{G}} \simeq \gr{\Cliq{2}{G}}/ \gr{T} \simeq \bigvee_{n} S^{1}.
\]
\end{proof}

We are now ready to prove Theorem~\ref{thm:main2} using the decomposition given in Lemma~\ref{l:decomposition} and a number of topological observations:

\begin{proof}[Proof of Theorem~\ref{thm:main2}]
Let $k=3$, since for $k = 2$ the result follows from Lemma~\ref{l:clique}. We further remark that $\Cliq{3}{C_4} \cong \Delta^3$ and $\gr{\Cliq{3}{C_4}} \simeq * \simeq \bigvee_0 S^3$. This gives us the base case for the induction.

We assume that the statement holds for all square sequence graphs $H_i$ of length $i<n$ and we will prove that it also holds for $H_{n}$.

Let $K = \Cliq{3}{H_{n-1}}$ and $L = \Cliq{2}{H_{n-1}} \oplus \Cliq{2}{G_{n}}$. Then by Lemma~\ref{l:decomposition}, we have the decomposition $\Cliq{3}{H_{n}} = K \cup L$ and $
K \cap L = \Cliq{2}{H_{n-1}} \oplus \Cliq{2}{G'_{n}}$, 
where $G'_{n} = G_{n}[V(G_{n}) \setminus \{x\}]$ in case of corner gluing and $G'_{n} = G_{n}[ V(G_{n}) \setminus \{x,v\}]$ in case of edge gluing. 

The rest of the proof is split into two cases depending on how $G_{n}$ is attached to $H_{n-1}$:
\vspace{1ex}
\newline
\textbf{Case 1: $G_{n}$ is attached to $H_{n-1}$ by edge gluing:}
By induction assumption $\gr{K} \simeq \bigvee_{\gamma_{n-1}} S^3$ and we will now describe the homotopy types of $\gr{L}$ and $\gr{K \cap L}$:

Since $L = \Cliq{2}{H_{n-1}} \oplus \Cliq{2}{G_{n}}$ and the intersection $\Cliq{2}{H_{n-1}} \cap \Cliq{2}{G_{n}}$ is the edge ($1$-simplex) $\{u,y\}$, Lemma~\ref{l:joinsimplex} implies that $\gr{L} \simeq \gr{\Cliq{2}{H_{n-1}} \star \Cliq{2}{G_{n}}}$. 

Further, by Lemma~\ref{l:clique}, $\gr{\Cliq{2}{H_{n-1}}} \simeq \bigvee_{n-1} S^1$ and $\gr{\Cliq{2}{G_{n}}} \simeq S^1$. 
Thus by Lemma~\ref{l:joinSpheres}, $\gr{L} \simeq \bigvee_{{n-1}} S^{1} \star S^1 \simeq \bigvee_{{n-1}} S^{3}$.

The intersection $K \cap L = \Cliq{2}{H_{n-1}} \oplus \Cliq{2}{G'_{n}}$ is a contractible complex by Lemma~\ref{l:joinsimplex} since $G'_{n}$ is an edge. Hence $\gr{K \cap L} \simeq *$ and Lemma~\ref{l:contraction} implies that $\gr{K \cup L} \simeq \gr{K} \vee \gr{L}$.

In summary, we get
\[\gr{\Cliq{3}{H_n}} \simeq \bigvee_{\gamma_{n-1}} S^3 \vee \bigvee_{n-1} S^3 \simeq \bigvee_{\gamma_n = {\gamma_{n-1} + {n-1}}} S^3.\]
\newline
\vspace{1ex}
\textbf{Case 2: $G_{n}$ is attached to $H_{n-1}$ by corner gluing:}
We want to argue similarly as in Case~1, by first decomposing $\Cliq{3}{H_n} = K \cup L$ into $K = \Cliq{3}{H_{n-1}}$ and $L = \{\sigma \in \Cliq{3}{H_n} \mid \sigma \cup \{x\} \in \Cliq{3}{H_n}\}$ and then describing the homotopy types of $K$,$L$ and $K \cap L$:

Following from the definition of $L$, we get $L \cong (\Cliq{2}{H_{n-1}} \oplus \{u,v\} )\oplus \{x\}$. Using the notation $L_u = \Cliq{2}{H_{n-1}} \oplus \{u\}$ and $L_v = \Cliq{2}{H_{n-1}} \oplus \{v\}$, we further rewrite as $L = (L_u \cup L_v) \star x$ , thus $L \cong C((L_u \cup L_v))$. Observe that $K\cap L = L_u \cup L_v$. We will show that $K\cap L$ is contractible in $K$, thus we will get that $\gr{K \cup L} \simeq \gr{K} \vee S(\gr{L_u \cup L_v})$ via Lemma~\ref{l:contractionSuspension} and then we compute the homotopy type of $\gr{L_u \cup L_v}$.

We will first give a complete description of the simplices of $L_u \cap L_v$: Let $A$ be the set of vertices $w$ of $H_{n-1}$ that share an edge to both $u$ and $v$ in $H_{n-1}$, i. e. 
\[A = \{w \in V(H_{n-1}) \mid \{u,w\}, \{v,w\} \in E(H_{n-1})\}.\]
We notice that $A$ is always non-empty\footnote{For illustration in Figure~\ref{fig:attach}, the size of $A$ is $2$.}, since $H_{n-1}$ contains the vertex $y$ that is connected to both $u$ and $v$. Further, $H_{n-1}[A \cup \{u,v\}]$ is a square sequence graph of length $|A|-1$ and thus $\gr{\Cliq{2}{H_{n-1}[A \cup \{u,v\}]}} \simeq \bigvee_{|A|-1} S^1$ by Lemma~\ref{l:clique}.

The simplices of $L_u \cap L_v$ are given by
\[
L_u \cap L_v  = (\Cliq{2}{H_i} \oplus \{u\}) \cap (\Cliq{2}{H_i} \oplus \{v\}) = \Cliq{2}{H_i} \cup \{\{u,v,w\} \mid w \in A\}.
\]
The simplices $\{\{u,v,w\} \mid w \in A\}$ provide a filler of the cycles in $\Cliq{2}{H_{n-1}[A \cup \{u,v\}]}$. Since these are also cycles in $\Cliq{2}{H_{n-1}}$, $L_u \cap L_v$ is homotopy equivalent to the complex obtained from $\Cliq{2}{H_{n-1}}$ by filling in all cycles that are in $\Cliq{2}{H_{n-1}[A \cup \{u,v\}]}$. Thus, by contracting these cycles, we obtain that
\[
\gr{L_u \cap L_v} \simeq \bigvee_{{n-1} - |A| + 1} S^1.
\]

Since $L_u$ is (up to homotopy) a cone over $L_u \cap L_v$ with apex $u$ (see Lemma~\ref{l:joinsimplex}) and similarly $L_v$ is a cone over $L_u \cap L_v$ with apex $v$, it follows that $L_u, L_v$ are contractible. We want to further show that $\gr{L_u \cup L_v} \simeq \gr{\Sigma(L_u\cap L_v)}$. To this end\footnote{For readers more accustomed to homotopy theory, we can alternatively argue that $\gr{L_u \cap L_v}$ is a homotopy pushout of $\gr{L_u} \simeq * $ and $\gr{L_v} \simeq *$ over $\gr{L_{u}\cap L_v}$, and since $\gr{L_{u}\cap L_v}$ is contractible in both $\gr{L_u}$ and $\gr{L_v}$, the result follows.}, it suffices to consider the simplicial map $p\colon \Sigma(L_u\cap L_v) \to (L_u \cup L_v)$ which is an identity on vertices of $L_u \cap L_v$ and apexes ${a_1, a_2}$ of the suspension are mapped to $u$ and $v$, respectively. One now uses Theorem~\ref{thm:quillenfiber} and checks that $p$ indeed induces a homotopy equivalence, which is left to the reader. In summary, this gives us
\[\gr{L_u \cup L_v} \simeq \Sigma(\bigvee_{{n-1} - |A| + 1} S^1) \simeq \bigvee_{{n-1} - |A| + 1} S^2.\]
However, $\gr{K}$ is by induction assumption homotopy equivalent to $\bigvee_{\gamma_{n-1}} S^3$ and thus $K \cap L = L_u \cup L_v$ is contractible in $K$. Moreover, $L$ is a cone over $L_u \cup L_v$ with apex $x$, and Lemma \ref{l:contractionSuspension} gives 
\[\gr{K \cup L} \simeq \gr{K} \vee \Sigma(\gr{L_u \cup L_v}) \simeq \bigvee_{\gamma_{n-1}} S^3 \vee \bigvee_{{n-1} - |A| + 1} S^3 \simeq \bigvee_{\gamma_n = \gamma_{n-1} + {n-1} - |A| + 1} S^3.\]
\end{proof}

We now use the preceding result and prove Theorem~\ref{thm:main}.
\begin{proof}[Proof of Theorem~\ref{thm:main}]
For $k = 2$, the result follows from Lemma~\ref{l:clique}. 
We will be using the square sequence
\[
G_1 = H_1 \subseteq H_2 \subseteq \cdots \subseteq H_{(m-1)(n-1)} \cong G
\]
of $G = G_{m,n}$, $m,n \geq 2$ that can be informally described as "row by row" and "bottom up". Formally, let $H'_{i,j}$, $1\leq i<m,1\leq j<n$ denote the subgraph of $G_{m,n}$ induced by the vertices $\{(x,y) \mid x\in\{i,i+1\}, y \in \{j,j+1 \}\}$. Then the square sequence of $G_{m,n}$ is defined as follows:

\[
G_i = H'_{\floor{\frac{i}{m-1}} + 1 , ((i-1) \% (m-1)) + 1}, \quad 1\leq i\leq (m-1)(n-1).
\]

It follows that in this sequence we are edge gluing  the squares in the first row and every leftmost square in each row and corner gluing the rest. 
Importantly, we observe that whenever we are corner gluing to some $H_i$, the set $A$ 
\[A = \{w \in V(H_{i}) \mid \{u,w\}, \{v,w\} \in E(H_{i})\}.\]
is of size $1$. 

We now prove that $\gr{\Cliq{3}{H_i}} \simeq \bigvee_{\binom{i}{2}} S^3$ by induction using Theorem~\ref{thm:main2}. The basic step is provided by Lemma~\ref{l:clique} and the fact that $\gr{\Cliq{3}{H_1}} \simeq *$. 
Thus we assume the result holds for all $1\leq i< (m-1)(n-1)$ and we prove it for $i+1$: In case $H_{i+1}$ is obtained via edge gluing, we get by the first case of the proof of Theorem~\ref{thm:main2} that 
\[
\gr{\Cliq{3}{H_i+1}} = \gr{\Cliq{3}{H_{i}}} \vee \bigvee_{i} S^3 \simeq \bigvee_{\binom{i}{2}} S^3 \vee \bigvee_{\binom{i}{1}} S^3 \simeq \bigvee_{\binom{i+1}{2}} S^3.
\]

In case $H_{i+1}$ is obtained via corner gluing, we get by the second case of the proof of Theorem~\ref{thm:main2} that 
\[
\gr{\Cliq{3}{H_i+1}} = \gr{\Cliq{3}{H_{i}}} \vee \bigvee_{i - |A| + 1} S^3 \simeq \bigvee_{\binom{i}{2}} S^3 \vee \bigvee_{\binom{i}{1}} S^3 \simeq \bigvee_{\binom{i+1}{2}} S^3,
\]
which finishes the proof.
\end{proof}
Again using details from the proof of Theorem~\ref{thm:main2}, we obtain the proof of Corollary~\ref{c:edgeseqeunce}.
\begin{proof}[Proof of Corollary~\ref{c:edgeseqeunce}]
The proof is by induction with respect to $n$ and uses the fact that edge gluing formula in the Case~1 of the proof of Theorem~\ref{thm:main2} holds for general $k$:
The basic step is again provided by Lemma~\ref{l:clique} and the simple observation that $\Cliq{k}{H_i}$ is contractible if $i<k$.
Next, we assume the statement holds for square sequence graphs of length $i<n$ and we will prove it for $n$. To this end, we essentially rewrite the proof of Theorem~\ref{thm:main2}, Case~1:

First, we decompose $\Cliq{k}{H_n}$ using the decomposition Lemma~\ref{l:decomposition} as a union of $K = \Cliq{k}{H_{n-1}}$ and $L = \Cliq{k-1}{H_{n-1}} \oplus \Cliq{2}{G_{n}}$.
By induction, $\gr{K}$ is homotopy equivalent to $\bigvee_{\binom{n-2}{k-1}} S^{2k-3}$. Also by induction and the fact we are edge gluing, we obtain
\[
\gr{L} \simeq \bigvee_{\binom{n-2}{k-2}} S^{2{(k-1)} - 3} \star S^1 \simeq \bigvee_{\binom{n-2}{k-2}} S^{2k - 3}.
\]
The intersection of $K\cap L$ is contractible and is contractible in both $K$ and $L$ from which we get that 
\[
\gr{\Cliq{k}{H_n}} \simeq \gr{K\cup L} \simeq \bigvee_{\binom{n-2}{k-1}} S^{2k-3} \vee \bigvee_{\binom{n-2}{k-2}} S^{3} \simeq \bigvee_{\binom{n-1}{k-1}} S^{2k-3}
\]
by the binomial formula.
\end{proof}
Finally, we proceed to the proof of Corollary~\ref{thm:TotalCutMain}:
\begin{proof}[Proof of Corollary~\ref{thm:TotalCutMain}]
For $k = 2$, the statement was proved in~\cite[Theorem~4.16]{BayerDJRSX24} and further for $k=3$ and in the case one of $m,n$ equals $2$ or $3$ it was given in~\cite[Theorem~1.1]{ChandrakarHRS24}.

So let us assume that $m>n \geq 3$. By
Theorem~\ref{thm:main}, we have
\[
 \gr{\Cliq{3}{G_{m,n}}} \simeq \bigvee_{
    \binom{(m-1)(n-1)}{2}} S^{3},
\]
The Alexander duality (Corollary~\ref{cor:AlexanderDual}) implies that the homology and cohomology groups of $\gr{\Delta_3^t (G)}$ are concentrated in dimension $mn - 6$ and are equal to $\bigoplus_{\binom{(m-1)(n-1)}{2}} \mathbb{Z}$. We thus only have to verify that $\pi_1(\gr{\Delta_3^t (G_{m,n})}) = 0$ and the rest follows from the combination of theorems of Whitehead and Hurewicz (Theorem~\ref{t:whitehead}).

It suffices to show that 
\[
\Delta_3^t (G_{m,n}) = \{ W\subseteq V(G_{m,n}) \mid (V(G_{m,n})\setminus W) \supseteq \sigma \in I_3(G_{m,n}) \}
\]
contains all $2$-simplices = subsets of vertices of size $3$, since then all cycles have a filler. However, in case $m,n \geq 3$, this condition is trivially satisfied: the graph $G_{3,3}$ is bipartite with one part containing $4$ and other $5$ vertices, so for any set of vertices $W$ of size $3$, $V(G)\setminus W$ contains an independent set of size $3$. So we can see that for $m,n\geq 3$, $\pi_1(\gr{\Delta_3^t (G_{m,n})}) = 0$ and thus 
\[
 \gr{\Delta_3^t (G_{m,n})} \simeq \bigvee_{
    \binom{(m-1)(n-1)}{2}} S^{3}.
\]
\end{proof}

\subparagraph*{Acknowledgements.}
I'd like to thank Lukáš Vokřínek for being a sounding board and discussion partner when preparing this article, Marija Jeli\'{c} Milutinovi\'{c} and anonymous reviewers for their comments on the preliminary version of this article.





\bibliography{references}
\end{document}